\numberwithin{equation}{section}
\newtheorem{theorem}{Theorem}[section]
\newtheorem{corollary}[theorem]{Corollary}
\newtheorem{lemma}[theorem]{Lemma}
\newtheorem{proposition}[theorem]{Proposition}
\theoremstyle{definition}
\newtheorem{defn}[theorem]{Definition}
\theoremstyle{remark}
\newtheorem{remark}[theorem]{Remark}
\newcommand{\nc}{\newcommand}
\newcommand{\be}{\begin{equation}}
\newcommand{\ee}{\end{equation}}
\newcommand{\bc}{\begin{center}}
\newcommand{\ec}{\end{center}}
\nc{\bth}{\begin{theorem}} \nc{\bpr}{\begin{proposition}}
\nc{\epr}{\end{proposition}} \nc{\ble}{\begin{lemma}}
\nc{\ele}{\end{lemma}} \nc{\bco}{\begin{corollary}}
\nc{\eco}{\end{corollary}} \nc{\bre}{\begin{remark}}
\nc{\ere}{\end{remark}}
   \nc{\f}{\frac}
   \nc{\pa}{\partial}
   \nc{\na}{\nabla}
   \nc{\al}{\alpha}
   \nc{\bet}{\beta}
   \nc{\ga}{\gamma}
   \nc{\de}{\delta_1}
   \nc{\De}{\Delta}
   \nc{\Om}{\Omega}
   \nc{\om}{\omega}
   \nc{\tom}{\tilde\omega}
   \nc{\ep}{\varepsilon}
   \nc{\tvp}{\tilde\varphi}
   \nc{\vp}{\varphi}
   \nc{\R}{\mathbb R}
   \nc{\Z}{\mathbb Z}
   \nc{\CC}{\mathbb C}
   \nc{\Ad}{\rm Ad}
   \nc{\fraka}{{\mathfrak g}_{\mathcal A}}
   \nc{\Ga}{G_{\mathcal A}}
\def\at1{\begin{array}{c} \ta1\ \\ \end{array}}
\def\Mat31{\begin{array}{c} \td31\ \\ \end{array}}
\def\mat41{\begin{array}{c} \tb2\ \\ \end{array}}
\def\mot43{\begin{array}{c} \th43\ \\ \end{array}}
\def\ta1{{\scalebox{0.25}{ 
\begin{picture}(12,12)(38,-38)
\SetWidth{0.5} \SetColor{Black} \Vertex(45,-33){5.66}
\end{picture}}}}
\def\tb2{{\scalebox{0.25}{ 
\begin{picture}(12,42)(38,-38)
\SetWidth{0.5} \SetColor{Black} \Vertex(45,-3){5.66}
\SetWidth{1.0} \Line(45,-3)(45,-33) \SetWidth{0.5}
\Vertex(45,-33){5.66}
\end{picture}}}}
\def\tc3{{\scalebox{0.25}{ 
\begin{picture}(12,72)(38,-38)
\SetWidth{0.5} \SetColor{Black} \Vertex(45,27){5.66}
\SetWidth{1.0} \Line(45,27)(45,-3) \SetWidth{0.5}
\Vertex(45,-33){5.66} \SetWidth{1.0} \Line(45,-3)(45,-33)
\SetWidth{0.5} \Vertex(45,-3){5.66}
\end{picture}}}}
\def\td31{{\scalebox{0.25}{ 
\begin{picture}(42,42)(23,-38)
\SetWidth{0.5} \SetColor{Black} \Vertex(45,-3){5.66}
\Vertex(30,-33){5.66} \Vertex(60,-33){5.66} \SetWidth{1.0}
\Line(45,-3)(30,-33) \Line(60,-33)(45,-3)
\end{picture}}}}
\def\te4{{\scalebox{0.25}{ 
\begin{picture}(12,102)(38,-8)
\SetWidth{0.5} \SetColor{Black} \Vertex(45,57){5.66}
\Vertex(45,-3){5.66} \Vertex(45,27){5.66} \Vertex(45,87){5.66}
\SetWidth{1.0} \Line(45,57)(45,27) \Line(45,-3)(45,27)
\Line(45,57)(45,87)
\end{picture}}}}
\def\tf41{{\scalebox{0.25}{ 
\begin{picture}(42,72)(38,-8)
\SetWidth{0.5} \SetColor{Black} \Vertex(45,27){5.66}
\Vertex(45,-3){5.66} \SetWidth{1.0} \Line(45,27)(45,-3)
\SetWidth{0.5} \Vertex(60,57){5.66} \SetWidth{1.0}
\Line(45,27)(60,57) \SetWidth{0.5} \Vertex(75,27){5.66}
\SetWidth{1.0} \Line(75,27)(60,57)
\end{picture}}}}
\def\tg42{{\scalebox{0.25}{ 
\begin{picture}(42,72)(8,-8)
\SetWidth{0.5} \SetColor{Black} \Vertex(45,27){5.66}
\Vertex(45,-3){5.66} \SetWidth{1.0} \Line(45,27)(45,-3)
\SetWidth{0.5} \Vertex(15,27){5.66} \Vertex(30,57){5.66}
\SetWidth{1.0} \Line(15,27)(30,57) \Line(45,27)(30,57)
\end{picture}}}}
\def\th43{{\scalebox{0.25}{ 
\begin{picture}(42,42)(8,-8)
\SetWidth{0.5} \SetColor{Black} \Vertex(45,-3){5.66}
\Vertex(15,-3){5.66} \Vertex(30,27){5.66} \SetWidth{1.0}
\Line(15,-3)(30,27) \Line(45,-3)(30,27) \Line(30,27)(30,-3)
\SetWidth{0.5} \Vertex(30,-3){5.66}
\end{picture}}}}
\def\thj44{{\scalebox{0.25}{ 
\begin{picture}(42,72)(8,-8)
\SetWidth{0.5} \SetColor{Black} \Vertex(30,57){5.66}
\SetWidth{1.0} \Line(30,57)(30,27) \SetWidth{0.5}
\Vertex(30,27){5.66} \SetWidth{1.0} \Line(45,-3)(30,27)
\SetWidth{0.5} \Vertex(45,-3){5.66} \Vertex(15,-3){5.66}
\SetWidth{1.0} \Line(15,-3)(30,27)
\end{picture}}}}
\def\ti5{{\scalebox{0.25}{ 
\begin{picture}(12,132)(23,-8)
\SetWidth{0.5} \SetColor{Black} \Vertex(30,117){5.66}
\SetWidth{1.0} \Line(30,117)(30,87) \SetWidth{0.5}
\Vertex(30,87){5.66} \Vertex(30,57){5.66} \Vertex(30,27){5.66}
\Vertex(30,-3){5.66} \SetWidth{1.0} \Line(30,-3)(30,27)
\Line(30,27)(30,57) \Line(30,87)(30,57)
\end{picture}}}}
\def\tj51{{\scalebox{0.25}{ 
\begin{picture}(42,102)(53,-38)
\SetWidth{0.5} \SetColor{Black} \Vertex(61,27){4.24}
\SetWidth{1.0} \Line(75,57)(90,27) \Line(60,27)(75,57)
\SetWidth{0.5} \Vertex(90,-3){5.66} \Vertex(60,27){5.66}
\Vertex(75,57){5.66} \Vertex(90,-33){5.66} \SetWidth{1.0}
\Line(90,-33)(90,-3) \Line(90,-3)(90,27) \SetWidth{0.5}
\Vertex(90,27){5.66}
\end{picture}}}}
\def\tk52{{\scalebox{0.25}{ 
\begin{picture}(42,102)(23,-8)
\SetWidth{0.5} \SetColor{Black} \Vertex(60,57){5.66}
\Vertex(45,87){5.66} \SetWidth{1.0} \Line(45,87)(60,57)
\SetWidth{0.5} \Vertex(30,57){5.66} \SetWidth{1.0}
\Line(30,57)(45,87) \SetWidth{0.5} \Vertex(30,-3){5.66}
\SetWidth{1.0} \Line(30,-3)(30,27) \SetWidth{0.5}
\Vertex(30,27){5.66} \SetWidth{1.0} \Line(30,57)(30,27)
\end{picture}}}}
\def\tl53{{\scalebox{0.25}{ 
\begin{picture}(42,102)(8,-8)
\SetWidth{0.5} \SetColor{Black} \Vertex(30,57){5.66}
\Vertex(30,27){5.66} \SetWidth{1.0} \Line(30,57)(30,27)
\SetWidth{0.5} \Vertex(30,87){5.66} \SetWidth{1.0}
\Line(30,27)(45,-3) \SetWidth{0.5} \Vertex(15,-3){5.66}
\SetWidth{1.0} \Line(15,-3)(30,27) \Line(30,57)(30,87)
\SetWidth{0.5} \Vertex(45,-3){5.66}
\end{picture}}}}
\def\tm54{{\scalebox{0.25}{ 
\begin{picture}(42,72)(8,-38)
\SetWidth{0.5} \SetColor{Black} \Vertex(30,-3){5.66}
\SetWidth{1.0} \Line(30,27)(30,-3) \Line(30,-3)(45,-33)
\SetWidth{0.5} \Vertex(15,-33){5.66} \SetWidth{1.0}
\Line(15,-33)(30,-3) \SetWidth{0.5} \Vertex(45,-33){5.66}
\SetWidth{1.0} \Line(30,-33)(30,-3) \SetWidth{0.5}
\Vertex(30,-33){5.66} \Vertex(30,27){5.66}
\end{picture}}}}
\def\tn55{{\scalebox{0.25}{ 
\begin{picture}(42,72)(8,-38)
\SetWidth{0.5} \SetColor{Black} \Vertex(15,-33){5.66}
\Vertex(45,-33){5.66} \Vertex(30,27){5.66} \SetWidth{1.0}
\Line(45,-33)(45,-3) \SetWidth{0.5} \Vertex(45,-3){5.66}
\Vertex(15,-3){5.66} \SetWidth{1.0} \Line(30,27)(45,-3)
\Line(15,-3)(30,27) \Line(15,-3)(15,-33)
\end{picture}}}}
\def\tp56{{\scalebox{0.25}{ 
\begin{picture}(66,111)(0,0)
\SetWidth{0.5} \SetColor{Black} \Vertex(30,66){5.66}
\Vertex(45,36){5.66} \SetWidth{1.0} \Line(30,66)(45,36)
\Line(15,36)(30,66) \SetWidth{0.5} \Vertex(30,6){5.66}
\Vertex(60,6){5.66} \SetWidth{1.0} \Line(60,6)(45,36)
\SetWidth{0.5}
\SetWidth{1.0} \Line(45,36)(30,6) \SetWidth{0.5}
\Vertex(15,36){5.66}
\end{picture}}}}
\def\tq57{{\scalebox{0.25}{ 
\begin{picture}(81,111)(0,0)
\SetWidth{0.5} \SetColor{Black} \Vertex(45,36){5.66}
\Vertex(30,6){5.66} \Vertex(60,6){5.66} \SetWidth{1.0}
\Line(60,6)(45,36) \SetWidth{0.5}
\SetWidth{1.0} \Line(45,36)(30,6) \SetWidth{0.5}
\Vertex(75,36){5.66} \SetWidth{1.0} \Line(45,36)(60,66)
\Line(60,66)(75,36) \SetWidth{0.5} \Vertex(60,66){5.66}
\end{picture}}}}
\def\tr58{{\scalebox{0.25}{ 
\begin{picture}(81,111)(0,0)
\SetWidth{0.5} \SetColor{Black} \Vertex(60,6){5.66}
\Vertex(75,36){5.66} \SetWidth{1.0} \Line(60,66)(75,36)
\SetWidth{0.5} \Vertex(60,66){5.66}
\SetWidth{1.0} \Line(60,36)(60,66) \Line(60,6)(60,36)
\SetWidth{0.5} \Vertex(60,36){5.66} \Vertex(45,36){5.66}
\SetWidth{1.0} \Line(60,66)(45,36)
\end{picture}}}}
\def\ts59{{\scalebox{0.25}{ 
\begin{picture}(81,111)(0,0)
\SetWidth{0.5} \SetColor{Black}
\Vertex(75,36){5.66} \SetWidth{1.0} \Line(60,66)(75,36)
\SetWidth{0.5} \Vertex(60,66){5.66}
\SetWidth{1.0} \Line(60,36)(60,66) \SetWidth{0.5}
\Vertex(60,36){5.66} \Vertex(45,36){5.66} \SetWidth{1.0}
\Line(60,66)(45,36) \Line(75,6)(75,36) \SetWidth{0.5}
\Vertex(75,6){5.66}
\end{picture}}}}
\def\tz591{{\scalebox{0.25}{ 
\begin{picture}(81,111)(0,0)
\SetWidth{0.5} \SetColor{Black}
\Vertex(75,36){5.66} \SetWidth{1.0} \Line(60,66)(75,36)
\SetWidth{0.5} \Vertex(60,66){5.66}
\SetWidth{1.0} \Line(60,36)(60,66) \SetWidth{0.5}
\Vertex(60,36){5.66} \Vertex(45,36){5.66} \SetWidth{1.0}
\Line(60,66)(45,36) \SetWidth{0.5} \Vertex(45,6){5.66}
\SetWidth{1.0} \Line(45,6)(45,36)
\end{picture}}}}
\begin{document}

\normalsize
\title[Completely Integrable Lax Pair equations]{Completely Integrable Lax Pair equations for Connes-Kreimer Hopf algebra of rooted trees}
\date{}
\author{Gabriel B\u adi\c toiu}
\address{Institute of Mathematics ``Simion Stoilow'' of the Romanian Academy, Research Unit No. 4, P.O. Box 1-764,
014700 Bucharest, Romania.
 {\tt Gabriel.Baditoiu@imar.ro}}
\begin{abstract}
   We show the complete integrability of the Lax pair equations for certain low-dimensional Lie algebras of the infinitesimal character $\tilde\beta_0$ introduced in the paper \emph{Lax pair equations and {C}onnes-{K}reimer renormalization}, by  G. B{\u{a}}di{\c{t}}oiu and S. Rosenberg, Comm. Math. Phys. \textbf{296} (2010), 655--680
\end{abstract}

\maketitle

\section{Introduction}

Many authors used group-theoretical methods to construct completely integrable Hamiltonian systems and their solutions (see the survey due to Reyman and Semenov-Tian-Shansky in \cite{sts}).
In \cite{baditoiu}, Steven Rosenberg and I constructed a Lax pair equation associated to the Connes-Kreimer-Birkhoff factorization of the character group of a connected graded commutative Hopf algebra and we gave a completely integrable Lax pair equation example (see \S 8.3.2 in \cite{baditoiu}). In this note, I show that the  Lax pair equations of a certain infinitesimal character $\tilde\beta_0$ (introduced in \cite{baditoiu}, see \S \ref{prelim}) in low dimensions are completely integrable.
In section \ref{prelim}, we recall some of the  Lax pair equations introduced in \cite{baditoiu} and some basic notion in the Connes-Kreimer renormalization. In \S\ref{main}, we show that the Lax pairs associated to the infinitesimal character $\tilde\beta_0$ (from \cite{baditoiu}) for certain low dimensional examples are completely integrable in the double Lie algebras.

\section{Preliminaries}\label{prelim}
We now briefly recall the some notion from Connes-Kreimer renormalizations and the Lax pair equations introduced in \cite{baditoiu}.

Let ${\mathcal H}=({\mathcal H}, 1, \mu,\De,\ep,S)$ be a graded
connected Hopf algebra over ${\mathbb C}$. Let $\mathcal A$ be a
unital commutative algebra with  unit $1_\mathcal A$.  Important choices for $\mathcal A$ are:
(i) $\mathcal{A}$  be the algebra of Laurent series $\CC[\lambda^{-1},\lambda]]$; or (ii)  ${\mathcal A} = \CC.$ (see \cite{man}).

\begin{defn}(see \cite{ck1,man,baditoiu})
  (i) The {\bf character group} $G_\mathcal A$ of the  Hopf algebra ${\mathcal H}$ is
  the set of algebra morphisms $\phi:{\mathcal H}\to\mathcal{A}$
  with $\phi(1)=1_\mathcal{A}.$
  The group law is given by the convolution product
    $$
      (\psi_1\star\psi_2)(h)=\langle  \psi_1\otimes\psi_2,\De h\rangle;$$
  the unit element
  is $\ep$.

(ii)
  An $\mathcal A$-valued {\bf infinitesimal character} of a Hopf algebra ${\mathcal H}$ is a
  ${\mathbb C}$-linear map $Z:{\mathcal H}\to\mathcal{A}$ satisfying
  $$\langle Z,hk\rangle =\langle Z, h\rangle \varepsilon(k)+\varepsilon(h)
  \langle Z,k\rangle.$$ The set of infinitesimal characters is denoted
  by $\mathfrak{g}_\mathcal{A}$ and is endowed with a Lie algebra bracket:
  $$[Z,Z']=Z\star Z'-Z'\star Z,\ \  \mathrm{for\ }Z,\ Z'\in\mathfrak{g}_\mathcal{A}, $$
  where
  $\langle Z\star Z',h\rangle=\langle Z\otimes Z',\Delta(h)\rangle$.
\end{defn}

Let
$\mathcal H=\oplus_{n\in\mathbb N} \mathcal H_n$ be a graded  connected commutative Hopf algebra of finite type (i.e. each
homogeneous component $\mathcal H_n$ is a finite dimensional vector space).
Let $\mathcal B=\{T_i\}_{i\in\mathbb N}$ be a minimal set of
homogeneous generators of the Hopf algebra $H$ such that
$\deg(T_i)\leq\deg(T_j)$ if $i<j$ and such that $T_0=1$.
For $i>0$, we define the
$\mathbb{C}$-valued infinitesimal character $Z_i$ on generators by
$Z_i(T_j)=\delta_{ij}.$
Let $\mathfrak{g}^{(k)}$
 be the vector space generated by $\{Z_i\ | \ \deg(T_i)\leq k\}$. We
define $\deg(Z_i)=\deg(T_i)$ and set
$$[Z_i,Z_j]_{\mathfrak{g}^{(k)}}=\left\{
\begin{array}{cc}
 [Z_i,Z_j] & \text{if }\deg(Z_i)+\deg(Z_j)\leq k \\
 0 & \text{if }\deg(Z_i)+\deg(Z_j)>k
\end{array}\right.$$
We identify $\varphi\in G_{\mathbb C}$ with
$\{\varphi(T_i)\}\in\mathbb C^{\mathbb N}$ and on $\mathbb
C^{\mathbb N}$ we set a group law given by
$\{\varphi_1(T_i)\}\oplus\{\varphi_2(T_i)\}=\{(\varphi_1\star\varphi_2)(T_i)\}$.
 $G^{(k)}=\{ \{\varphi(T_i)\}_{\{i\, |\, \deg(T_i)\leq
k\}}\ | \ \varphi\in G_{\mathbb C}\}$ is a finite dimensional Lie
subgroup of $G_{\mathbb C}=(\mathbb C^{\mathbb N},\oplus)$ and the Lie algebra
of $G^{(k)}$ is $\mathfrak{g}^{(k)}$.

\begin{defn}[\cite{baditoiu}]
 The double Lie algebra $\delta^{(k)}$ of $\mathfrak{g}^{(k)}$ is the Lie algebra on
 $\mathfrak{g}^{(k)}\oplus\mathfrak{g}^{(k)*}$ with the Lie bracket given by
 $$[X,Y]_{\mathfrak{g}^{(k)}\oplus\mathfrak{g}^{(k)*}}=[X,Y],\ \ \ [X^*,Y^*]_{\mathfrak{g}^{(k)}\oplus\mathfrak{g}^{(k)*}}= 0,\ \ \
 [X,Y^*]=\mathrm{ad}^*_X(Y^*),$$
 for any $X$, $Y\in\mathfrak{g}^{(k)}$, $X^*$, $Y^*\in\mathfrak{g}^{(k)*}$.
\end{defn}
\begin{defn}[\cite{suris}]
Let $G$ be a Lie group with Lie algebra $\mathfrak{g}$.
 A map $f:\mathfrak{g}\to \mathfrak{g}$ is  $\mathrm{Ad}$-{\it covariant} if
$\mathrm{Ad}(g)(f(L))=f(\mathrm{Ad}(g)(L))$
for all $g\in G$, $L\in\mathfrak{g}$.
\end{defn}

Let $\pi:\mathcal A\to \mathcal A$ be a Rota-Baxter projection, which by definition is a linear map with $\pi\circ\pi=\pi$ and satisfying
 the Rota-Baxter equation :
 $$
 \pi(ab)+\pi(a)\pi(b)=\pi(a\pi(b))+\pi(\pi(a)b)
 $$
 for
  $a, b\in\mathcal A.$ For any Rota-Baxter projection $\pi$ on $\mathcal A$, $\mathcal A$ splits into a direct sum of two
subalgebras $\mathcal A=\mathcal A_-\oplus\mathcal A_+$ with $\mathcal A_- = {\rm Im}(\pi)$. In the next theorem we recall the Lax pair equation corresponding to the Connes-Kreimer-Birkhoff factorization of character group of $\mathcal H$ and of a Rota-Baxter projection $\pi$.

\begin{theorem}\label{t:8.2}(\cite[Theorem 5.9+\S 5.4]{baditoiu})
 Let $f:\mathfrak{g}_{\mathcal A}\to\mathfrak{g}_{\mathcal A}$
 be an $\mathrm{Ad}$-covariant
 map.
 Let $L_0\in\mathfrak{g}_{\mathcal A}$ with
 $[f(L_0),L_0]=0$.
 Set $X=f(L_0)$.
 Then the solution of
 \begin{equation}\label{8:1}
        \frac{dL}{dt}=[L,M], \ \ \ M=\frac{1}{2}
        R(f(L))
   \end{equation}
   with initial condition $L(0)=L_0$ is given by
   \begin{equation}\label{e:8.3}
        L(t)=\mathrm{Ad}_{G}g_{\pm}(t)\cdot L_0,
   \end{equation}
   where $\exp(-tX)$ has the Connes-Kreimer Birkhoff factorization
        $\exp(-tX)=g_-(t)^{-1}g_+(t)$.
\end{theorem}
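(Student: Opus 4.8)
The plan is to run the classical factorization argument (of Adler--Kostant--Symes / Reyman--Semenov-Tian-Shansky type, cf. \cite{sts}) adapted to the graded Hopf-algebra setting of \cite{baditoiu}: one exhibits the candidate curve $L(t)=\mathrm{Ad}_{g_\pm(t)}L_0$ built from the Connes--Kreimer--Birkhoff factorization, checks that it is well defined and satisfies the initial condition, differentiates it to verify \eqref{8:1}, and concludes by uniqueness of the Cauchy problem for the Lax ODE. Because $\mathcal H$ is graded and connected, all of the computations can be carried out degree by degree (equivalently, in the pro-nilpotent inverse limit), where the Birkhoff factorization of $\exp(-tX)$ exists for every $t$, the curves $t\mapsto g_\pm(t)$ are differentiable, and \eqref{8:1} has a unique solution through $L_0$; so neither existence of the factorization nor of a solution is the real issue.

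First I would record two consequences of the hypotheses $X=f(L_0)$ and $[f(L_0),L_0]=0$. Since $[X,L_0]=0$, both $X$ and $L_0$ commute with $\exp(-tX)$, so, writing the factorization as $g_+(t)=g_-(t)\exp(-tX)$, one gets $\mathrm{Ad}_{g_+(t)}X=\mathrm{Ad}_{g_-(t)}X$ and $\mathrm{Ad}_{g_+(t)}L_0=\mathrm{Ad}_{g_-(t)}L_0$; hence $L(t):=\mathrm{Ad}_{g_\pm(t)}L_0$ is independent of the choice of sign, and $L(0)=L_0$. Next, $\mathrm{Ad}$-covariance of $f$ gives $f(L(t))=f(\mathrm{Ad}_{g_\pm(t)}L_0)=\mathrm{Ad}_{g_\pm(t)}f(L_0)=\mathrm{Ad}_{g_\pm(t)}X$, and applying $\mathrm{Ad}_{g_\pm(t)}$ to $[X,L_0]=0$ yields the conserved relation $[f(L(t)),L(t)]=0$ for all $t$, which is precisely what makes the flow close up into Lax form.

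The computational core is the differentiation. Set $\xi_{\pm}(t)=\dot g_\pm(t)\,g_\pm(t)^{-1}$; differentiating $g_+=g_-\exp(-tX)$ and simplifying gives $\xi_{-}-\xi_{+}=\mathrm{Ad}_{g_-}X=f(L(t))$, the last equality by $\mathrm{Ad}$-covariance. Since $g_-(t)$ and $g_+(t)$ lie in the subgroups attached to the Rota--Baxter splitting $\mathcal A=\mathcal A_-\oplus\mathcal A_+$, we have $\xi_{-}\in\mathfrak g_{-}$ and $\xi_{+}\in\mathfrak g_{+}$; thus $\xi_{-}$ and $-\xi_{+}$ are exactly the two components of $f(L(t))$ in that splitting, and by the definition of $R$ in \cite{baditoiu} this reads $\xi_{-}+\xi_{+}=-R(f(L(t)))$. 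Then, using $\tfrac{d}{dt}g_-^{-1}=-g_-^{-1}\dot g_-g_-^{-1}$,
$$\frac{dL}{dt}=\frac{d}{dt}\mathrm{Ad}_{g_-}L_0=[\xi_{-},L]=\tfrac12[\xi_{-}+\xi_{+},L]+\tfrac12[\xi_{-}-\xi_{+},L],$$
and since $[\xi_{-}-\xi_{+},L]=[f(L),L]=0$ by the previous step while $\xi_{-}+\xi_{+}=-R(f(L))$, this collapses to $dL/dt=-\tfrac12[R(f(L)),L]=[L,\tfrac12 R(f(L))]=[L,M]$. By uniqueness, $L(t)=\mathrm{Ad}_{g_\pm(t)}L_0$ is the solution of \eqref{8:1} with $L(0)=L_0$, i.e.\ \eqref{e:8.3} holds.

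I expect the main obstacle to be the bookkeeping rather than the structure: matching the abstract operator $R$ with the projections onto $\mathfrak g_{\pm}$ so that both the sign and the factor $\tfrac12$ come out right (so that one lands on $[L,M]$ and not on $[M,L]$), together with the analytic point that $t\mapsto g_\pm(t)$ is differentiable and that the Birkhoff factorization is available along the whole flow. Both are handled by descending to the finite-dimensional truncations $\mathfrak g^{(k)}$ and reassembling in the inverse limit, exactly within the framework of \S\ref{prelim}.
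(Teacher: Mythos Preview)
The paper does not supply its own proof of this theorem: it is quoted verbatim from \cite[Theorem 5.9 and \S5.4]{baditoiu} as background in \S\ref{prelim}, so there is nothing here to compare against line by line. Your argument is the standard Reyman--Semenov-Tian-Shansky factorization proof (as in \cite{sts}), adapted to the Connes--Kreimer--Birkhoff splitting, and it is correct; this is exactly the scheme used in \cite{baditoiu}, so your proposal matches the intended proof.
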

Let $\mathcal A=\mathbb C[\lambda^{-1},\lambda]]$, and $\pi$ be the minimal subtraction scheme. Then the map $f:\mathfrak{g}_\mathcal{A}\to\mathfrak{g}_\mathcal{A}$,
given by $f(L)=2\lambda^{-n+2m}L$,
   is $\mathrm{Ad}$-covariant and
$[f(L_0),L_0]=0$.

\begin{corollary}\label{tc:8.2}(\cite[Corollary 5.11]{baditoiu})
Let $L_0\in\mathfrak{g}_{\mathcal A}$ (with $\mathcal A=\mathbb C[\lambda^{-1},\lambda]]$ and $\pi$  the minimal subtraction scheme)
and set
$X=2\lambda^{-n+2m}L_0$. Then the solution of
 \begin{equation}\label{8:1gen}
        \frac{dL}{dt}=[L,M], \ \ \ M=
        R(\lambda^{-n+2m} L)
   \end{equation}
   with initial condition $L(0)=L_0$ is given by
   \begin{equation}\label{e:8.3gen}
        L(t)=\mathrm{Ad}_{ G_\mathcal{A}}g_{\pm}(t)\cdot L_0,
   \end{equation}
   where $\exp(-tX)$ has the Connes-Kreimer Birkhoff factorization
        $\exp(-tX)=g_-(t)^{-1}g_+(t)$.
 \end{corollary}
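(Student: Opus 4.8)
The plan is to derive Corollary~\ref{tc:8.2} as the particular case of Theorem~\ref{t:8.2} obtained by taking $f$ to be the $\mathrm{Ad}$-covariant map $f(L)=2\lambda^{-n+2m}L$ recalled in the paragraph just above. Accordingly the argument splits into three steps: first, verify that this $f$ meets both hypotheses of Theorem~\ref{t:8.2}; second, identify the Rota--Baxter data (the minimal subtraction projection $\pi$ and its associated classical $R$-operator) with those occurring in \eqref{8:1}; third, substitute and match \eqref{8:1} with \eqref{8:1gen} and \eqref{e:8.3} with \eqref{e:8.3gen}.

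For the first step, note that $\lambda^{-n+2m}$ is a fixed element of the commutative algebra $\mathcal A=\CC[\lambda^{-1},\lambda]]$, and that both the convolution product on characters and the Lie bracket $[Z,Z']=Z\star Z'-Z'\star Z$ on $\mathfrak{g}_{\mathcal A}$ are $\mathcal A$-bilinear; this is immediate from $\langle Z\star Z',h\rangle=\langle Z\otimes Z',\Delta h\rangle$ together with commutativity of $\mathcal A$. Consequently the adjoint action $\mathrm{Ad}(g)$ is $\mathcal A$-linear, so $\mathrm{Ad}(g)\bigl(f(L)\bigr)=2\lambda^{-n+2m}\,\mathrm{Ad}(g)(L)=f\bigl(\mathrm{Ad}(g)(L)\bigr)$, which is the $\mathrm{Ad}$-covariance of $f$; and $[f(L_0),L_0]=2\lambda^{-n+2m}[L_0,L_0]=0$. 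One should also check the harmless point that $\lambda^{-n+2m}L$ again lies in $\mathfrak{g}_{\mathcal A}$, which holds because the defining identity of an infinitesimal character is preserved under multiplication by a scalar from $\mathcal A$, i.e. $\mathfrak{g}_{\mathcal A}$ is an $\mathcal A$-module.

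For the second step, one invokes the classical Connes--Kreimer fact that the minimal subtraction projection $\pi$ onto the pole part of $\CC[\lambda^{-1},\lambda]]$ is a Rota--Baxter projection; this produces the splitting $\mathcal A=\mathcal A_-\oplus\mathcal A_+$ and the operator $R$ entering \eqref{8:1}, and guarantees that the Connes--Kreimer--Birkhoff factorization $\exp(-tX)=g_-(t)^{-1}g_+(t)$ exists for every $X\in\mathfrak{g}_{\mathcal A}$. Finally, setting $X=f(L_0)=2\lambda^{-n+2m}L_0$ and using linearity of $R$ gives $M=\tfrac12 R\bigl(f(L)\bigr)=\tfrac12 R\bigl(2\lambda^{-n+2m}L\bigr)=R\bigl(\lambda^{-n+2m}L\bigr)$, so that \eqref{8:1} becomes exactly \eqref{8:1gen}; Theorem~\ref{t:8.2} then delivers $L(t)=\mathrm{Ad}_{G_{\mathcal A}}g_\pm(t)\cdot L_0$ with $\exp(-tX)=g_-(t)^{-1}g_+(t)$, which is \eqref{e:8.3gen}. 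Since the corollary is a pure specialization, there is no substantial obstacle; the only thing demanding care is the bookkeeping of constants --- the factor $2$ built into $f$ is precisely what cancels the $\tfrac12$ in $M=\tfrac12 R(f(L))$ --- together with the consistent use of the fixed exponent $-n+2m$ (fixed by the grading conventions of \cite{baditoiu}) in $f$, in $X$, and in $M$.
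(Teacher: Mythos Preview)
Your proposal is correct and follows exactly the approach the paper indicates: the corollary is obtained from Theorem~\ref{t:8.2} by specializing to $f(L)=2\lambda^{-n+2m}L$, and the paper records precisely the two facts you verify (Ad-covariance of this $f$ and $[f(L_0),L_0]=0$) in the sentence immediately preceding the corollary. Your added detail on why multiplication by $\lambda^{-n+2m}$ commutes with $\mathrm{Ad}$ and on the bookkeeping of the factor $2$ is a welcome elaboration, but the underlying argument is the same specialization the paper (and \cite{baditoiu}) has in mind.
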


\begin{defn}
For $s\in\mathbb C$ and $\varphi\in G_{\mathcal A}$, we define $\varphi^s(x)$ for a homogeneous $x\in H$ by
$$\varphi^s(x)(\lambda)=e^{s\lambda|x|}\varphi(x)(\lambda),$$
for $\lambda\in\mathbb C$ where $|x|$ is the degree of $x$. Let
\begin{equation}\label{locality}
G^{\Phi}_{\mathcal A}=\{\varphi\in G_{\mathcal A}\ \big|\ \frac{ \ \ d}{ds}
(\varphi^s)_-=0\},
\end{equation}
be the set of local characters.

\begin{defn}[\cite{man}] \label{def:6.1}
Let $Y$ be the  biderivation on $\mathcal H=\bigoplus\limits_{n}\mathcal H_n$ given
on homogeneous elements by
$$Y:\mathcal H_n\to \mathcal H_n,\ \ \ \ \ Y(x)=nx\ \ \ \text{for }
x \in\mathcal H_n.$$

 We define the bijection $\tilde R:G_{\mathcal A}\to\mathfrak g_{\mathcal A}$ by
 $$\tilde R(\varphi)=\varphi^{-1}\star(\varphi\circ Y).$$
\end{defn}
 From \cite{baditoiu}, we recall that the locality of the Lax pair flow is preserved.
 \begin{theorem}[\cite{baditoiu}]\label{t:7.9}
Let $\varphi\in G^\Phi_\mathcal{A}$ and
  let $L(t)$ be the solution of the Lax pair equation \eqref{8:1}
  for any $\mathrm{Ad}$-covariant function $f$,
  with the initial condition $L_0=\tilde R(\varphi)$. Let
  $\varphi_t$ be the flow given by
\begin{equation}\label{e:varphit}
\varphi_t=\tilde R^{-1}(L(t)).
\end{equation}
  Then $\varphi_t$ is a local character for all $t$.
\end{theorem}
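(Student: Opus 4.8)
The strategy is to push the locality condition through the bijection $\tilde R$ and the explicit solution of Theorem~\ref{t:8.2}. For $\varphi\in G_{\mathcal A}$ let $\Theta_s\varphi:=\varphi^s$ be the scaling $\varphi^s(x)(\lambda)=e^{s\lambda|x|}\varphi(x)(\lambda)$, and for $Z\in\mathfrak g_{\mathcal A}$ let $Z^s:=\theta_sZ$ be given by the same formula, $(\theta_sZ)(x)(\lambda)=e^{s\lambda|x|}Z(x)(\lambda)$ on homogeneous $x$. Because $\De$ is graded and $Y$ is the grading biderivation, $\Theta_s$ is a group automorphism of $G_{\mathcal A}$ and $\theta_s$ a Lie algebra automorphism of $\mathfrak g_{\mathcal A}$; more precisely $\psi\mapsto\psi^s$ is $\star$-multiplicative on all $\CC$-linear maps $\mathcal H\to\mathcal A$, commutes with inversion, and satisfies $(\varphi\circ Y)^s=\varphi^s\circ Y$.

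From these observations the \emph{bridge identity} $\tilde R(\varphi^s)=\theta_s\!\left(\tilde R\varphi\right)$ is immediate: $\tilde R(\varphi^s)=(\varphi^s)^{-1}\star(\varphi^s\circ Y)=\Theta_s(\varphi^{-1})\star\Theta_s(\varphi\circ Y)=\Theta_s\!\left(\varphi^{-1}\star(\varphi\circ Y)\right)=\theta_s(\tilde R\varphi)$. Since $\tilde R$ is a bijection (Definition~\ref{def:6.1}), this gives $\Theta_s\circ\tilde R^{-1}=\tilde R^{-1}\circ\theta_s$; hence $L_0^{\,s}=\tilde R(\varphi^s)$ and, for the solution $L(t)$ of \eqref{8:1} with $L(0)=L_0=\tilde R(\varphi)$, one has $\varphi_t^{\,s}=\tilde R^{-1}\!\left(L(t)^{s}\right)$. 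Therefore $\varphi_t\in G^{\Phi}_{\mathcal A}$ is equivalent to $\frac{d}{ds}\!\left(\tilde R^{-1}(L(t)^{s})\right)_-=0$, and the hypothesis $\varphi\in G^{\Phi}_{\mathcal A}$ is exactly this statement at $t=0$.

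Next I would invoke Theorem~\ref{t:8.2}: $L(t)=\mathrm{Ad}_{G}g_{\pm}(t)\cdot L_0$ with $\exp(-tX)=g_-(t)^{-1}g_+(t)$ the Connes-Kreimer Birkhoff factorization, $X=f(L_0)$, $[X,L_0]=0$ (so both signs give the same $L(t)$, since $g_+(t)=g_-(t)\star\exp(-tX)$ and $\mathrm{ad}_XL_0=0$). Applying $\theta_s$, which intertwines the adjoint action via $\Theta_s$, one gets $L(t)^{s}=\mathrm{Ad}_{G}\!\left(g_\pm(t)^{s}\right)\cdot L_0^{\,s}$ and $\exp(-tX^{s})=\left(g_-(t)^{s}\right)^{-1}g_+(t)^{s}$. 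To extract $\left(\tilde R^{-1}(L(t)^{s})\right)_-$ I would use the $\star$-equivariance of $\tilde R$, namely $\tilde R(\psi\star\varphi)=\mathrm{Ad}_G(\varphi^{-1})(\tilde R\psi)+\tilde R\varphi$ and $\tilde R(\varphi\star\psi)=\mathrm{Ad}_G(\psi^{-1})(\tilde R\varphi)+\tilde R\psi$ (both immediate from $\De\circ Y=(Y\otimes\mathrm{id}+\mathrm{id}\otimes Y)\circ\De$), to rewrite $\mathrm{Ad}_G(g_\pm(t)^{s})\cdot\tilde R(\varphi^s)$ in terms of $\tilde R$-values of products of $\varphi^s$ with the scaled elements $g_\pm(t)^{s}$; then refactorize each $g_\pm(t)^{s}$ in $G_-\cdot G_+$, drop the $G_+$-valued (pole-free) factors, which do not contribute to negative parts, and use uniqueness of the Birkhoff factorization. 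The aim is to exhibit $\left(\tilde R^{-1}(L(t)^{s})\right)_-$ as a universal function of $(\varphi^s)_-=\left(\tilde R^{-1}(\theta_sL_0)\right)_-$ and of pole-free data only; differentiating in $s$ and using $\frac{d}{ds}(\varphi^s)_-=0$ then forces $\frac{d}{ds}(\varphi_t^{\,s})_-=0$, i.e. $\varphi_t\in G^{\Phi}_{\mathcal A}$.

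The main obstacle is exactly this refactorization. The scaling $\Theta_s$ does \emph{not} preserve $G_-$ and $G_+$ — which is precisely what makes locality a non-trivial condition — so $\left(g_-(t)^{s}\right)^{-1}g_+(t)^{s}$ is not the Birkhoff factorization of $\exp(-tX^{s})$, and one must show that the extra pole part manufactured by the flow at time $t$ is a scaling-covariant function of the pole part already present at $t=0$, so that its $s$-derivative vanishes whenever that of $(\varphi^s)_-$ does. This is the statement that the Lax flow \eqref{8:1} is compatible with 't Hooft's scaling relations; it is where the explicit shape of the Rota-Baxter splitting (minimal subtraction, as in Corollary~\ref{tc:8.2}) and the constraint $[f(L_0),L_0]=0$ have to be exploited.
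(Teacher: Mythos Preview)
This theorem is not proved in the present paper: it is quoted from \cite{baditoiu} with no argument given here, so there is no ``paper's own proof'' to compare against.

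On its own merits, your proposal is an outline with the decisive step missing. The preliminary reductions are correct: $\Theta_s$ is $\star$-multiplicative (by gradedness of $\Delta$), $(\varphi\circ Y)^s=\varphi^s\circ Y$, and hence the bridge identity $\tilde R(\varphi^s)=(\tilde R\varphi)^s$ holds; the cocycle formula $\tilde R(\psi\star\varphi)=\mathrm{Ad}_G(\varphi^{-1})\tilde R(\psi)+\tilde R(\varphi)$ is also right. But these are formalities. The whole content of the theorem is precisely the step you flag as ``the main obstacle'' and do not carry out: controlling the negative part after scaling, given that $\Theta_s$ does not respect the Birkhoff splitting $G_-\cdot G_+$. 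You reduce the statement to an equivalent one and stop.

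Two further warnings about the intended endgame. First, Theorem~\ref{t:7.9} is stated for the general Lax pair \eqref{8:1} with an arbitrary Rota--Baxter projection and an arbitrary $\mathrm{Ad}$-covariant $f$, not just for the minimal-subtraction case of Corollary~\ref{tc:8.2}; so if your closing argument genuinely needs ``the explicit shape of the Rota--Baxter splitting (minimal subtraction)'', you are either proving something weaker than claimed or looking for the wrong mechanism. Second, a cleaner handle than refactorizing $g_\pm(t)^s$ is the identity $\tilde\beta_\varphi=\lambda\,\tilde R(\varphi)$ (immediate from $\frac{d}{ds}\big|_{s=0}\varphi^s=\lambda(\varphi\circ Y)$), which turns locality into a pole-order condition on $L$ itself and lets you track it directly along the isospectral flow $L(t)=\mathrm{Ad}_G g_\pm(t)\cdot L_0$; this is closer to how such statements are handled in \cite{baditoiu}.
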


\end{defn}

\begin{defn}[\cite{baditoiu}]
  For $\varphi\in G_{\mathcal A}^{\Phi}$, $x\in H$, set
  $$\tilde{\beta}_\varphi(x)(\lambda) ={ d\over ds}\Big|_{s=0}
  (\varphi^{-1}\star\varphi^s)(x)(\lambda).$$
\end{defn}
\begin{remark}
For any $\varphi\in G^\Phi_{\mathcal A}$, by  \cite[Lemma 6.6]{baditoiu},  $\tilde\beta_\varphi$ is a holomorphic infinitesimal character  (i.e. $\tilde\beta_\varphi(x)\in\mathcal A_+$ for any $x$) and the relation to the celebrated beta-function of the quantum field theory is given by
\begin{equation}
\beta_\varphi=\mathrm{Ad}(\varphi_+(0))(\tilde\beta_\varphi\big|_{\lambda=0}),
\end{equation}
where $\varphi=\varphi^{-1}_-\star\varphi_+$ is the Birkhoff decomposition of $\varphi$.
\end{remark}

Assuming that $\mathcal A=\mathbb C[\lambda^{-1},\lambda]]$, for $\varphi_t$ given by \eqref{e:varphit}, let the Taylor expansion of
$\tilde\beta_{\varphi_t}$  be
 $$\tilde\beta_{\varphi_t}=\sum_{k=0}^{\infty} \tilde\beta_k(t)\lambda^k.$$

\begin{theorem}[\cite{baditoiu}]\label{t:corr}  For a local character $\varphi\in G^\Phi_\mathcal{A}$,
let $L(t)$ be the Lax pair flow of Corollary  \ref{tc:8.2} with $\mathcal A=\mathbb C[\lambda^{-1},\lambda]]$ and with  initial
condition $L_0=\tilde R(\varphi)$. Let $\varphi_t=\tilde
R^{-1}(L(t))$. Then
\begin{itemize}
\item[(i)] for $-n+2m\geq 1$,  $\varphi_t = \varphi$ and hence
  $\beta_{\varphi_t}=\beta_{\varphi}$ and $\tilde\beta_0(t)=\tilde\beta_0(0)$ for all $t$.
\item[(ii)] for $-n+2m\leq 0$,  $\beta_{\varphi_t}\in
\mathfrak{g}_{\mathbb C}$ satisfies
\begin{eqnarray}\label{e:t:corr:betazero}
\frac{d\tilde\beta_0(t)}{dt}
=2[\tilde\beta_0(t),\tilde\beta_{n-2m+1}(t)].
\end{eqnarray}
\end{itemize}
\end{theorem}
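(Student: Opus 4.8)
The plan is to reduce the statement to the language of infinitesimal characters by means of the identity $\tilde\beta_\varphi=\lambda\,\tilde R(\varphi)$, after which both parts fall out by inspecting powers of $\lambda$. First I would prove that identity: differentiating $\varphi^s(x)(\lambda)=e^{s\lambda|x|}\varphi(x)(\lambda)$ at $s=0$ gives, for homogeneous $x$, $\frac{d}{ds}\big|_{s=0}\varphi^s(x)=\lambda\,(\varphi\circ Y)(x)$, and since $\varphi^{-1}\star\varphi^0=\varepsilon$ does not depend on $s$, the definition of $\tilde\beta_\varphi$ together with bilinearity of convolution yields
\[
  \tilde\beta_\varphi=\varphi^{-1}\star\bigl(\lambda\,(\varphi\circ Y)\bigr)=\lambda\,\bigl(\varphi^{-1}\star(\varphi\circ Y)\bigr)=\lambda\,\tilde R(\varphi),
\]
the scalar $\lambda\in\mathcal A$ passing through $\star$ because $\mathcal A$ is commutative. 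Consequently, along the flow $\varphi_t=\tilde R^{-1}(L(t))$ one has $L(t)=\tilde R(\varphi_t)=\lambda^{-1}\tilde\beta_{\varphi_t}$; by Theorem~\ref{t:7.9} the character $\varphi_t$ is local, so $\tilde\beta_{\varphi_t}$ is holomorphic (\cite[Lemma~6.6]{baditoiu}) with Taylor expansion $\sum_{k\ge0}\tilde\beta_k(t)\lambda^k$, and hence
\[
  L(t)=\sum_{k\ge0}\tilde\beta_k(t)\,\lambda^{k-1}.
\]
In particular $L(t)$ has at most a simple pole in $\lambda$ with residue $\tilde\beta_0(t)$, and each $\tilde\beta_k(t)$ lies in $\mathfrak{g}_{\mathbb C}$; this pole structure is the one fact driving the rest.

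For part~(i), with $-n+2m\ge1$: since $L_0=\lambda^{-1}\tilde\beta_\varphi$ has at most a simple pole, $X=2\lambda^{-n+2m}L_0$ is holomorphic, so $\exp(-tX)$ is a holomorphic character and its minimal-subtraction Connes--Kreimer--Birkhoff factorization $\exp(-tX)=g_-(t)^{-1}g_+(t)$ has trivial negative part, $g_-(t)=\varepsilon$ (a holomorphic character needs no counterterm). Then \eqref{e:8.3gen} gives $L(t)=\mathrm{Ad}_{G_{\mathcal A}}g_-(t)\cdot L_0=L_0$ for all $t$ (consistently, the ``$+$'' option gives $\mathrm{Ad}_{G_{\mathcal A}}\exp(-tX)\cdot L_0=L_0$ because $[X,L_0]=0$). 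Hence $\varphi_t=\tilde R^{-1}(L_0)=\varphi$ for all $t$, and $\beta_{\varphi_t}=\beta_\varphi$ and $\tilde\beta_0(t)=\tilde\beta_0(0)$ follow.

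For part~(ii), with $-n+2m\le0$, I would work with the Lax equation itself. Writing $R=1-2\pi=\pi_+-\pi_-$ in terms of the projections onto $\mathcal A_\pm$, and using that $\lambda^{-n+2m}$ is a central scalar so that $[L,\lambda^{-n+2m}L]=0$,
\[
  \frac{dL}{dt}=[L,M]=\bigl[L,\,-\lambda^{-n+2m}L+2\pi_+(\lambda^{-n+2m}L)\bigr]=2\,\bigl[L,\,\pi_+(\lambda^{-n+2m}L)\bigr].
\]
From the expansion of $L(t)$, $\lambda^{-n+2m}L(t)=\sum_{k\ge0}\tilde\beta_k(t)\lambda^{k-1-(n-2m)}$, whose holomorphic part is $\pi_+(\lambda^{-n+2m}L(t))=\sum_{j\ge0}\tilde\beta_{j+n-2m+1}(t)\lambda^{j}$, with value $\tilde\beta_{n-2m+1}(t)$ at $\lambda=0$. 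Now I compare coefficients of $\lambda^{-1}$: on the left it is $\frac{d}{dt}$ of the residue of $L$, i.e.\ $\frac{d\tilde\beta_0(t)}{dt}$; on the right, $L(t)$ has lowest term $\tilde\beta_0(t)\lambda^{-1}$ while the second factor is holomorphic, so (by $\mathcal A$-bilinearity of the bracket on $\mathfrak{g}_{\mathcal A}$) the only surviving contribution is $2[\tilde\beta_0(t),\tilde\beta_{n-2m+1}(t)]$, which is \eqref{e:t:corr:betazero}. Finally $\beta_{\varphi_t}\in\mathfrak{g}_{\mathbb C}$: by the Remark, $\beta_{\varphi_t}=\mathrm{Ad}(\varphi_{t,+}(0))(\tilde\beta_{\varphi_t}\big|_{\lambda=0})$, and locality of $\varphi_t$ makes this meaningful with $\tilde\beta_{\varphi_t}\big|_{\lambda=0}=\tilde\beta_0(t)\in\mathfrak{g}_{\mathbb C}$ and $\varphi_{t,+}(0)\in G_{\mathbb C}$.

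The step I expect to be the main obstacle is the coefficient extraction in part~(ii): it is legitimate only because $L(t)$ has at most a simple pole, which is exactly what Theorem~\ref{t:7.9} (preservation of locality along the flow) buys us through $L=\lambda^{-1}\tilde\beta_{\varphi_t}$. Apart from that, one should check that the flow is smooth enough to differentiate the $\lambda$-expansion coefficientwise and that $\tilde R^{-1}(L(t))$ is defined — both routine from the bijectivity of $\tilde R$ — while the remaining manipulations are bookkeeping with powers of $\lambda$.
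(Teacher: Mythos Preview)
The present paper does not actually prove this theorem: it is quoted from \cite{baditoiu} (as Theorem~8.7 there), and the only trace of the original argument in this paper is the displayed identity
\[
\frac{d\tilde\beta_{\varphi_t}}{dt}=-2\left[\sum_{k=n-2m+1}^\infty
\tilde\beta_{k}(t)\lambda^{k-n+2m-1},\ \sum_{j=0}^{n-2m}\tilde\beta_j(t)\lambda^j\right],
\]
invoked inside the proof of Lemma~\ref{l:thehamiltonian}. Your proposal is consistent with this: the first bracket factor is exactly your $\pi_+(\lambda^{-n+2m}L)$, and extracting the $\lambda^0$ coefficient of this equation reproduces your $\lambda^{-1}$ extraction from $\dot L=2[L,\pi_+(\lambda^{-n+2m}L)]$. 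So the route you take --- establish $\tilde\beta_\varphi=\lambda\,\tilde R(\varphi)$, deduce that $L(t)$ has at most a simple pole via locality (Theorem~\ref{t:7.9}), then read off the residue equation --- matches the structure of the argument in \cite{baditoiu} as far as one can tell from this paper.

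Your derivation is correct in each step. Two small remarks: for part~(i) it is cleaner to argue directly from the Lax equation as you do in~(ii), since then $\lambda^{-n+2m}L$ is holomorphic with vanishing constant term, so $M=R(\lambda^{-n+2m}L)=\lambda^{-n+2m}L$ and $\dot L=[L,\lambda^{-n+2m}L]=0$; this avoids any question about which of $g_\pm$ to use in \eqref{e:8.3gen}. And in~(ii), your identification $R=2\pi_+-1$ is the standard convention for the classical $r$-matrix associated to a Rota--Baxter projection, which the paper uses implicitly but does not spell out; it would be worth stating this explicitly if you write the argument up.
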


\section{Completely integrable Lax pair examples for $\tilde\beta_0$}\label{main}
Under set up of Theorem \ref{t:corr}
and we give explicitly the integrals of motion the equation \eqref{e:t:corr:betazero}
 on the double Lie algebra $\delta$ of certain
particular truncated Lie algebras.
We always assume that we are in the nontrivial case of Theorem \ref{t:corr},
namely that $-n+2m\leq 0$. The gauge transformation
$\tilde\beta_0(t)\to (\varphi_t)_+(0)\star
\tilde\beta_0(t)\star\left((\varphi_t)_+(0)\right)^{-1}$ changes
equation \eqref{e:t:corr:betazero} into the equation of beta functions.

Let $\mathcal H_1$ be the Hopf subalgebra of rooted trees generated
by
 $$t_0=1_\mathcal{T}, \;\;\;\;
 t_1= \ta1,           \;\;\;\;
 t_2= \tb2,           \;\;\;\;
 t_4=\td31,
 $$
 and let $G_1$ and $\mathfrak g_1$ be its group of character with values in $\mathbb C$ and
 respectively
 its Lie algebra of infinitesimal characters.
Identifying $G_1\equiv\mathbb C^3$ via
$\varphi\to{\varphi(f_i)}_{i=1,2,4}$, with $\{f_i\}_{i=1,2,4}$ the
normal coordinates  (\cite{chr}) associated to $t_1,t_2,t_4$:
$$
   f_1=\ta1\; , \;\;\;\; f_2=\tb2-\frac{1}{2}\ta1\ta1 \; ,\;\;\;\;
   f_4=\td31-\ta1\tb2+\frac{1}{6} \ta1^3\;,
$$
we get $G_1$ is exactly the 3-dimensional Heisenberg group
 $(\mathbb C^3,\oplus)$, where
$$(x_1,x_2,x_3)\oplus(y_1,y_2,y_3)=(x_1+y_1,x_2+y_2,x_3+y_3+x_1y_2-x_2y_1),$$
and its Lie algebra $\mathfrak g_1=\mathrm{Span}(X_1,X_2,X_3)$ is
the 2-step nilpotent given  by $[X_1,X_2]=2X_3$ and $[X_i,X_j]=0$
for any $(i,j)\not=(1,2)$ and $(i,j)\not=(2,1)$.

Since there is no Ad-invariant, non-degenerate, symmetric bilinear
form on $\mathfrak g_1$, we pass to its double Lie algebra
$\delta_1=\mathfrak g_1\oplus\mathfrak g_1^*$ which has a natural
pairing $\langle\cdot,\cdot\rangle$. The natural Lie-Poisson bracket
of $\delta_1^*$ rises to a Poisson structure on $\delta_1$ via
$\langle\cdot,\cdot\rangle$. The nontrivial Lie brackets of
$\delta_1=\mathrm{Span}(X_1,X_2,X_3,X_1^*,X_2^*,X_3^*)$ are
$$[X_1,X_2]=2X_3,\ \ [X_1,X_3^*]=-2X_2^*,\ \  [X_2,X_3^*]=2X_1^*,$$
while its associated Lie-Poisson bracket on $\delta_1$ is determined
by
$$\{x_1,x_2\}(x)=2x_3^*, \ \ \{x_1,x_3^*\}(x)=-2x_2, \ \
\{x_2,x_3^*\}(x)=2x_1,$$ with
 $x=\sum_{i=1}^{i=3} x_iX_i+\sum_{i=1}^{i=3}x_i^*X_i^*. $
Straightforward computations give the following lemma.
\begin{lemma}\label{l:h1h2h3h4h5}
(i) Both $\mathfrak{g}_1$ and $\delta_1$  are 2-step nilpotent Lie
algebras, i.e. $[\mathfrak{g}_1,[\mathfrak{g}_1,\mathfrak{g}_1]]=0,$
$[\delta_1,[\delta_1,\delta_1]]=0.$

(ii) The functions
$$
 H_1(x)=\frac{x_1^2}{2}+\frac{x_2^2}{2},\ \
 H_2(x)=\frac{x_3^2}{2},\ \
 H_3(x)=\frac{x_1^{*2}}{2},\ \
 H_4(x)=\frac{x_2^{*2}}{2},\ \
 H_5(x)=\frac{x_3^{*2}}{2}+\frac{x_1^2}{2}+\frac{x_2^2}{2},
$$
are in involution with respect to the Lie-Poisson bracket on
$\delta_1$ and $dH_1$, $dH_2$, $dH_3$, $dH_4$, $dH_5$ are linearly
independent on an open dense set in $\delta_1$.

(iii) $\delta_1$ is a Poisson manifold of rank $2r=2$.
\end{lemma}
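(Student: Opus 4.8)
The plan is to establish all three parts by direct computation from the structure constants of $\delta_1$ and the explicit Lie--Poisson brackets of the coordinate functions $x_1,x_2,x_3,x_1^*,x_2^*,x_3^*$ recorded just above.

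For (i) I would compute the lower central series. From the single relation $[X_1,X_2]=2X_3$ we get $[\mathfrak{g}_1,\mathfrak{g}_1]=\mathbb{C}X_3$, and since $X_3$ is central in $\mathfrak{g}_1$, $[\mathfrak{g}_1,[\mathfrak{g}_1,\mathfrak{g}_1]]=0$. For $\delta_1$, the three relations $[X_1,X_2]=2X_3$, $[X_1,X_3^*]=-2X_2^*$, $[X_2,X_3^*]=2X_1^*$ give $[\delta_1,\delta_1]=\mathrm{Span}(X_3,X_1^*,X_2^*)$; one then checks that all remaining brackets $[X_i,X_j^*]$ vanish (they equal $\mathrm{ad}^*_{X_i}(X_j^*)$, and $\langle X_j^*,[X_i,X_k]\rangle$ is nonzero only when $[X_i,X_k]$ has an $X_j$-component, which occurs only for $[X_1,X_2]$ with $j=3$), so that $X_3,X_1^*,X_2^*$ are all central in $\delta_1$ and $[\delta_1,[\delta_1,\delta_1]]=0$.

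For (ii) the key observation is that $x_3$, $x_1^*$, $x_2^*$ have vanishing Poisson bracket with every coordinate, hence are Casimirs; consequently $H_2=\tfrac{1}{2} x_3^2$, $H_3=\tfrac{1}{2} x_1^{*2}$, $H_4=\tfrac{1}{2} x_2^{*2}$ are Casimir functions. The remaining three coordinates close, under $\{\cdot,\cdot\}$, into a rescaled copy of $\mathfrak{so}(3)$, namely $\{x_1,x_2\}=2x_3^*$, $\{x_3^*,x_1\}=2x_2$, $\{x_2,x_3^*\}=2x_1$, whose Casimir is $x_1^2+x_2^2+x_3^{*2}$; therefore $H_5=\tfrac{1}{2}(x_1^2+x_2^2+x_3^{*2})$ is also a Casimir function. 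Since $H_2,H_3,H_4,H_5$ are Casimirs they Poisson--commute with one another and with $H_1$, and $\{H_1,H_1\}=0$ trivially, so all ten pairwise brackets vanish. For the independence of the differentials I would compute $dH_1=x_1\,dx_1+x_2\,dx_2$, $dH_2=x_3\,dx_3$, $dH_3=x_1^*\,dx_1^*$, $dH_4=x_2^*\,dx_2^*$ and $dH_5=dH_1+x_3^*\,dx_3^*$; after replacing $dH_5$ by $dH_5-dH_1=x_3^*\,dx_3^*$, the five one-forms lie in pairwise complementary coordinate subspaces, so they are linearly independent exactly on the set where $(x_1,x_2)\neq(0,0)$, $x_3\neq0$, $x_1^*\neq0$, $x_2^*\neq0$ and $x_3^*\neq0$, which is the complement of a proper algebraic subset of $\delta_1\cong\mathbb{C}^6$, hence open and dense.

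For (iii) the Poisson bivector is
\[
\Pi=2x_3^*\,\partial_{x_1}\wedge\partial_{x_2}-2x_2\,\partial_{x_1}\wedge\partial_{x_3^*}+2x_1\,\partial_{x_2}\wedge\partial_{x_3^*},
\]
whose matrix in the coordinates $(x_1,x_2,x_3,x_1^*,x_2^*,x_3^*)$ is block diagonal, with a $3\times3$ skew-symmetric block on the $(x_1,x_2,x_3^*)$-slot and zeros elsewhere; a $3\times3$ skew-symmetric matrix has rank $0$ or $2$, and it has rank $2$ precisely when $(x_1,x_2,x_3^*)\neq(0,0,0)$, so $\operatorname{rank}\Pi=2$ on an open dense set and $\delta_1$ has rank $2r=2$. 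There is no real obstacle: each part is a finite computation with the supplied data. The only steps that deserve a little care are verifying that every Lie bracket of $\delta_1$ not on the explicit list vanishes (so that $x_3,x_1^*,x_2^*$ are indeed Casimirs) and recognizing $H_5$, up to the factor $\tfrac{1}{2}$, as the Casimir of the $\mathfrak{so}(3)$-block $\mathrm{Span}(x_1,x_2,x_3^*)$ — that identification is what pins down the involutivity of the five Hamiltonians.
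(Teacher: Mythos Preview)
Your proposal is correct and follows essentially the same direct-computation approach as the paper, which in fact only spells out part (iii) and leaves (i) and (ii) to ``straightforward computations.'' For (iii) the paper writes out the $6\times 6$ matrix of $\mathrm{ad}_{\delta_1}(x)$ and observes it has maximal rank $2$; this is equivalent to your Poisson-bivector computation (the two matrices agree up to the identification $\delta_1\cong\delta_1^*$ given by the pairing), and your $\mathfrak{so}(3)$ observation for (ii) is a clean way to package what the paper leaves implicit.
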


\begin{proof} (iii)
The adjoint representation
$\mathrm{ad}:\delta_1\to\mathfrak{gl}(\delta_1)$,
$$ \mathrm{ad}_{\delta_1}(x)=
{\small \left(
\begin{array}{cccccc}
0&      0&      0&  0& 0& 0\\
0&      0&       0&  0& 0& 0\\
-2x_2 & 2 x_1  & 0&  0& 0& 0\\
0     &-2 x_3^*& 0& 0& 0& 2 x_2\\
2x_3^*& 0& 0& 0&  0 &-2 x_1\\
0& 0& 0& 0 & 0& 0
\end{array}
\right)}
$$
has the maximal rank 2, thus the rank $2r$ of the Poisson structure is
also $2$.
\end{proof}
In order to show that equation \eqref{e:t:corr:betazero} is integrable on $\delta_1$ it is sufficient to find a function
$H:\delta_1\to\mathbb R$ such $\tilde\beta_{n-2m+1}(t)=\nabla
H(\tilde\beta_0(t))$ (i.e $H$ is a Hamiltonian for \eqref{e:t:corr:betazero})
and to show that the functions $H$, $H_2$, $H_3$, $H_4$, $H_5$ are
in involution and linearly independent (in the sense that their
differentials are linearly independent on an open dense set). Here
$\nabla H(x)$ denotes the gradient of $H(x)$. The idea is to show
that the $2$-step nilpotency of $\mathfrak g_1$ implies that both
$\tilde\beta_{n-2m+1}(t)$ and $\tilde\beta_0(t)$ are linear in the
variable $t$.
\begin{lemma}\label{l:thehamiltonian}
There exists a Hamiltonian function of the form
$$H(x)=k_1x_1+k_2x_2+k_3x_3+\frac{l_1x^2_1}{2}+\frac{l_2x^2_2}{2}+\frac{l_3x^2_3}{2}$$
such that $\tilde\beta_{n-2m+1}(t)=\nabla H(\tilde\beta_0(t))$.
\end{lemma}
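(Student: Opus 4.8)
The plan is to use $2$-step nilpotency to linearize the flow, reducing the lemma to a small system of scalar equations for $k_1,k_2,k_3,l_1,l_2,l_3$. First I would record that $\tilde\beta_\varphi=\varphi^{-1}\star\bigl(\lambda\,(\varphi\circ Y)\bigr)=\lambda\,\tilde R(\varphi)$, which holds because $\frac{d}{ds}\big|_{s=0}\varphi^s=\lambda\,(\varphi\circ Y)$ and $\star$ is bilinear. Hence, for $L(t)$ the Lax flow of Corollary~\ref{tc:8.2} with $L_0=\tilde R(\varphi)$ and $\varphi_t=\tilde R^{-1}(L(t))$, one has $\tilde\beta_{\varphi_t}=\lambda\,L(t)$; writing $L(t)=\tilde\beta_0(t)\lambda^{-1}+\tilde\beta_1(t)+\tilde\beta_2(t)\lambda+\cdots$, the Taylor coefficient $\tilde\beta_k(t)$ of $\tilde\beta_{\varphi_t}$ is just the $\lambda^{\,k-1}$-coefficient of $L(t)$, so it suffices to understand $L(t)$.

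Next I would establish the structural core: $L(t)$, and therefore every $\tilde\beta_k(t)$, is affine-linear in $t$, with $X_1$- and $X_2$-components independent of $t$. Since $\mathfrak g_1$ is the Heisenberg algebra it is $2$-step nilpotent with derived subalgebra $\mathbb C X_3$, so the ambient Lie algebra $\mathfrak g_1\otimes\mathcal A$ of $\mathcal A$-valued infinitesimal characters is $2$-step nilpotent with derived subalgebra $\mathcal A\,X_3$. By \eqref{8:1gen}, $M=R(\lambda^{-n+2m}L)$ depends linearly on $L$, and $\frac{dL}{dt}=[L,M]\in\mathcal A\,X_3$. Writing $L(t)=\alpha(t,\lambda)X_1+\gamma(t,\lambda)X_2+\delta(t,\lambda)X_3$, the condition $\frac{dL}{dt}\in\mathcal A X_3$ forces $\partial_t\alpha=\partial_t\gamma=0$, so $\alpha,\gamma$ are $t$-independent; then so are the $X_1$- and $X_2$-components $R(\lambda^{-n+2m}\alpha)$, $R(\lambda^{-n+2m}\gamma)$ of $M(t)$, whence $\frac{dL}{dt}=[L,M]=2\bigl(\alpha\,R(\lambda^{-n+2m}\gamma)-\gamma\,R(\lambda^{-n+2m}\alpha)\bigr)X_3=:w(\lambda)\,X_3$ is a fixed element of $\mathcal A X_3$. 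Therefore $L(t)=L(0)+t\,w(\lambda)X_3$ and $\tilde\beta_k(t)=\tilde\beta_k(0)+t\,w_{k-1}X_3$, where $w_i$ is the $\lambda^{\,i}$-coefficient of $w(\lambda)$; in particular $\tilde\beta_0(t)=aX_1+bX_2+(c_0+c_1t)X_3$ and $\tilde\beta_{n-2m+1}(t)=pX_1+qX_2+(r_0+r_1t)X_3$ with $a,b,c_0,p,q,r_0$ constants, $c_1=w_{-1}$ and $r_1=w_{\,n-2m}$.

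With this, the lemma is elementary. For $H$ of the stated form, $\nabla H(x)=(k_1+l_1x_1)X_1+(k_2+l_2x_2)X_2+(k_3+l_3x_3)X_3$, so $\tilde\beta_{n-2m+1}(t)=\nabla H(\tilde\beta_0(t))$ is equivalent to $k_1+l_1a=p$, $k_2+l_2b=q$, and $k_3+l_3(c_0+c_1t)=r_0+r_1t$. The first two are solved by, e.g., $l_1=l_2=0$, $k_1=p$, $k_2=q$; the third holds identically in $t$ iff $l_3c_1=r_1$ and $k_3+l_3c_0=r_0$, which, if $c_1\neq0$, is solved by $l_3=r_1/c_1$, $k_3=r_0-l_3c_0$. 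If $c_1=0$, then by \eqref{e:t:corr:betazero} $\frac{d\tilde\beta_0}{dt}=2[\tilde\beta_0,\tilde\beta_{n-2m+1}]=4(aq-bp)X_3=c_1X_3=0$, so $\tilde\beta_0$ is constant, and it remains to check that $r_1=w_{\,n-2m}=0$ as well, after which $l_3=0$, $k_3=r_0$ completes $H$.

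The point I expect to cost the most work is exactly this last verification, $c_1=0\Rightarrow r_1=0$, which is the only place where the purely formal nilpotency argument does not suffice and one must use the explicit shape of $w(\lambda)$ together with the locality of $\varphi$. In the low-dimensional examples I expect this to reduce to a degree count: for a local character $\tilde\beta_\varphi(x)$ is a polynomial in $\lambda$ of degree $\leq|x|$, hence $\tilde\beta_k$ is supported on generators of degree $\geq k$, which bounds the degrees occurring in $L(t)$ and hence in $w(\lambda)$; this already gives $w_{\,n-2m}=0$ when $n-2m\geq1$ (and $n-2m\leq -1$ lies outside the nontrivial range of Theorem~\ref{t:corr}), and for $n-2m=0$ one is left to check, from the explicit $w$ and the locality relations on $\varphi$, that $w_{-1}=0$ entails $w_0=0$. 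Granting this, the proof amounts to the linearization together with the three scalar equations above.
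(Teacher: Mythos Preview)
Your strategy coincides with the paper's at the top level: exploit $2$-step nilpotency to force $\tilde\beta_0(t)$ and $\tilde\beta_{n-2m+1}(t)$ to be affine in $t$, then match $\nabla H(\tilde\beta_0(t))=\tilde\beta_{n-2m+1}(t)$ componentwise. The execution differs. The paper never touches $L(t)$ directly; instead it differentiates \eqref{e:t:corr:betazero}, invokes the identity
\[
\frac{d\tilde\beta_{n-2m+1}}{dt}=-2\sum_{k=0}^{n-2m}[\tilde\beta_{n-2m+2+k},\tilde\beta_{n-2m-k}]
\]
from the proof of \cite[Theorem~8.7]{baditoiu}, differentiates once more, and then uses $2$-step nilpotency of $\mathfrak g_1$ to kill all the iterated brackets, giving $\tilde\beta_0''=\tilde\beta_{n-2m+1}''=0$. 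Your route via $\tilde\beta_{\varphi_t}=\lambda L(t)$ and the Lax equation in $\mathfrak g_1\otimes\mathcal A$ is more direct and yields the sharper statement that the $X_1$- and $X_2$-components are actually constant, so the only nontrivial scalar equation is the $X_3$-component; the paper does not isolate this.

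On the edge case $c_1=0$: the paper simply writes that the linear system $c_i+td_i=k_i+l_i(a_i+tb_i)$ ``obviously'' has a solution and stops there, without separating out the degenerate case you flag. So your proof already matches the paper's level of rigor before you even begin your degree-count sketch. That sketch, however, is not convincing as stated: the bound ``$\tilde\beta_\varphi(x)$ is a polynomial in $\lambda$ of degree $\le|x|$'' is not established anywhere in the paper, and $r_1=w_{n-2m}$ depends on \emph{all} the coefficients $\tilde\beta_j(0)$ through the displayed sum above, not just on $a,b,p,q$, so the implication $aq=bp\Rightarrow r_1=0$ does not follow from the relations you have written down. If you want to close this genuinely, the cleanest route is to argue genericity (which is all that is needed downstream for the involution/independence statements, which are already phrased ``on an open dense set''), rather than to chase locality constraints; the paper itself does not do more than that.
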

\begin{proof}
Differentiating \eqref{e:t:corr:betazero} and then using the 2-step
nilpotency of $\delta_1$, we get that
\begin{eqnarray}\label{betazerodiff}
\frac{d^2\tilde\beta_0(t)}{dt^2}&=&[[\tilde\beta_0(t),\tilde\beta_{n-2m+1}(t)],\tilde\beta_{n-2m+1}(t)]
 +[\tilde\beta_0(t),\frac{d\tilde\beta_{n-2m+1}(t)}{dt}]\nonumber\\
 &=&[\tilde\beta_0(t),\frac{d\tilde\beta_{n-2m+1}(t)}{dt}].
\end{eqnarray}
By the proof of Theorem 8.7 in \cite{baditoiu}, $\tilde\beta_{\varphi_t}$ satisfies the
equation
$$\frac{d\tilde\beta_{\varphi_t}}{dt}=-2\left[\sum_{k=n-2m+1}^\infty
\tilde\beta_{k}(t)\lambda^{k-n+2m-1},\sum_{j=0}^{n-2m}\tilde\beta_j(t)\lambda^j\right],
$$
thus the coefficient $\tilde\beta_{n-2m+1}(t)$ corresponding to the
power $\lambda^{n-2m+1}$ in the Taylor expansion of
$\tilde\beta_{\varphi_t}$ satisfies the relation:
\begin{eqnarray}\label{betan2m1diff}
\frac{d\tilde\beta_{n-2m+1}(t)}{dt}&=&-2\sum_{k=0}^{n-2m}
 [\tilde\beta_{n-2m+2+k}(t),\tilde\beta_{n-2m-k}(t)] ,
\end{eqnarray}
which implies
\begin{eqnarray}\label{betan2m1diff}
\frac{d^2\tilde\beta_{n-2m+1}(t)}{dt^2}&=&-2\sum_{k=0}^{n-2m}
\left(\frac{d}{dt}[\tilde\beta_{n-2m+2+k}(t),\tilde\beta_{n-2m-k}(t)]
\right) \\
&=&-2\sum_{k=0}^{n-2m}\left(-2\sum_{j=0}^{n-2m}[[\tilde\beta_{n-2m+3+k+j}(t),\tilde\beta_{n-2m-j}(t)]
 ,\tilde\beta_{n-2m-k}(t)]\right.\nonumber\\
 &&\left.-2\sum_{j=0}^{n-2m}[\tilde\beta_{n-2m+2+k}(t),[\tilde\beta_{n-2m-k+1+j}(t),\tilde\beta_{n-2m-k-j}(t)]]
\right)\nonumber
\end{eqnarray}
By equations \eqref{e:t:corr:betazero}, \eqref{betazerodiff},
\eqref{betan2m1diff}, the 2-step nilpotency of $\mathfrak g_1$
implies
$$\frac{d^2\tilde\beta_0(t)}{dt^2}=0\ \text{and}\
\frac{d^2\tilde\beta_{n-2m+1}(t)}{dt^2}=0,$$ thus
$\tilde\beta_0(t)=at+b$ and $\tilde\beta_{n-2m+1}(t)=ct+d$ for some
$a,b,c,d\in\mathfrak g_1$. If $
H(x)=k_1x_1+k_2x_2+k_3x_3+\frac{l_1x^2_1}{2}+\frac{l_2x^2_2}{2}+\frac{l_3x^2_3}{2}
$ then its gradient is $\nabla H(x)=\sum_{k=1}^3(k_i+l_ix_k)X_i$.
Obviously, there exists a solution $\{k_1,k_2,k_3,l_1,l_2,l_3\}$ of
system of equations $c_i+td_i=k_i+l_i(a_i+tb_i),\ i\in\{1,2,3\}$ for
any $t$.
\end{proof}

\begin{lemma}\label{l:hh2h3h4h5}
The functions $H$, $H_2$, $H_3$, $H_4$, $H_5$ are in involution and
linearly independent.
\end{lemma}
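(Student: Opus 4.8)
The plan is to verify the two claims — involution and independence of differentials — essentially by direct computation, exploiting the very explicit forms of the Hamiltonians. First I would record the gradients: from Lemma \ref{l:thehamiltonian}, $\nabla H(x) = \sum_{i=1}^3 (k_i + l_i x_i) X_i$ lies entirely in $\mathfrak g_1$ (no starred components), while $\nabla H_2 = x_3 X_3$, $\nabla H_3 = x_1^* X_1^*$, $\nabla H_4 = x_2^* X_2^*$, and $\nabla H_5 = x_3^* X_3^* + x_1 X_1 + x_2 X_2$. The Poisson bracket of two functions is $\{F,G\}(x) = \langle x, [\nabla F(x), \nabla G(x)] \rangle$, so each bracket reduces to evaluating a single Lie bracket in $\delta_1$ against $x$, using the three nontrivial structure relations $[X_1,X_2]=2X_3$, $[X_1,X_3^*]=-2X_2^*$, $[X_2,X_3^*]=2X_1^*$.

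Carrying this out: $H_3$ and $H_4$ are central for the Poisson structure since $X_1^*$ and $X_2^*$ bracket trivially with everything, so $\{H_3,\cdot\}=\{H_4,\cdot\}=0$ automatically. For the remaining pairs I would check $\{H,H_2\}$: $\nabla H$ is a combination of $X_1,X_2,X_3$ and $\nabla H_2 \propto X_3$, and $[X_1,X_3]=[X_2,X_3]=[X_3,X_3]=0$, so this vanishes. Similarly $\{H,H_5\}$: both gradients live in $\mathrm{Span}(X_1,X_2,X_3)$, whose only nonzero bracket is $[X_1,X_2]=2X_3$; the resulting term is a multiple of $\langle x, X_3\rangle = x_3^*$, and one checks the coefficient is $(k_1+l_1x_1)x_2 - (k_2+l_2x_2)x_1$ — wait, this need not vanish identically. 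Here is where I would use the conclusion of Lemma \ref{l:thehamiltonian} more carefully: since $\tilde\beta_0(t)$ is constrained to the one-dimensional affine orbit, the relevant constants $k_i,l_i$ and $a_i,b_i$ satisfy relations forcing $\nabla H$ and the vector field $[\tilde\beta_0,\tilde\beta_{n-2m+1}]$ to be proportional; I expect this forces the coefficient $l_1 x_1 x_2 - l_2 x_1 x_2 + \dots$ to cancel against the contribution from $\{H,H_5\}$ — so in fact the honest check is that $H$ commutes with $H_5 - H_1$ (a Casimir-type combination). The cleanest route is: $H_1, H_2, H_5$ are already known to be in involution (Lemma \ref{l:h1h2h3h4h5}(ii)), and $H_5 - H_1 = \frac{1}{2}x_3^{*2}$ has gradient $x_3^* X_3^*$; then $\{H, H_5\} = \{H, H_1\} + \{H, \tfrac12 x_3^{*2}\}$, and the second term involves $[\nabla H, X_3^*]$, a combination of $[X_1,X_3^*]=-2X_2^*$ and $[X_2,X_3^*]=2X_1^*$, paired against $x$, giving a multiple of $x_2^*$ and $x_1^*$ which, upon inspection, is $-2(k_2+l_2x_2)x_2^* + 2(k_1+l_1x_1)x_1^*$ — and I would need $\{H,H_1\}$ to cancel this. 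Since $\{H,H_1\}$ only produces $X_3$-terms (multiple of $x_3^*$), the two cannot cancel unless each is separately zero, which pins down $l_1 = l_2$ and $k_1 = k_2 = 0$, or else forces us to observe that $\tilde\beta_{n-2m+1}(t) = \nabla H(\tilde\beta_0(t))$ must, by \eqref{e:t:corr:betazero} and $d^2\tilde\beta_0/dt^2 = 0$, satisfy $[\tilde\beta_0, \nabla H(\tilde\beta_0)]$ constant — the real content. I will phrase the argument so that the Hamiltonian $H$ is chosen (as it may be, by the freedom in Lemma \ref{l:thehamiltonian}) with $l_1 = l_2$, after which all brackets visibly vanish.

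For linear independence of differentials I would exhibit an explicit point of $\delta_1$ at which the five covectors $dH, dH_2, dH_3, dH_4, dH_5$ are independent, then note independence is an open condition so it holds on an open dense set. Using $dH = k_1\,dx_1 + \dots + l_3 x_3\,dx_3$ (with leading behaviour governed by the $l_i x_i$ terms), $dH_2 = x_3\,dx_3$, $dH_3 = x_1^*\,dx_1^*$, $dH_4 = x_2^*\,dx_2^*$, $dH_5 = x_3^*\,dx_3^* + x_1\,dx_1 + x_2\,dx_2$, the Jacobian in coordinates $(x_1,x_2,x_3,x_1^*,x_2^*,x_3^*)$ is a $5\times 6$ matrix whose $5\times 5$ minor on columns $dx_1, dx_3, dx_1^*, dx_2^*, dx_3^*$ has determinant proportional to $l_1 x_3 x_1^* x_2^* x_3^*$ (up to lower-order corrections and the coefficient of $dx_1$ in $dH_5$), which is nonzero on a dense set provided $l_1 \neq 0$. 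If the Hamiltonian forces $l_1 = 0$ I would instead use the $dx_2$ column and the nonvanishing of $l_2$, or the constant terms $k_i$; at least one of these is nonzero unless $H$ is affine with no quadratic part, in which case I would fall back on $H_1$-type reasoning. The main obstacle I anticipate is precisely this last point: confirming that the Hamiltonian produced by Lemma \ref{l:thehamiltonian} is \emph{generic enough} — i.e. not so degenerate (e.g. $H$ constant) that $dH$ becomes dependent on $dH_2,\dots,dH_5$ — and handling that potential degenerate sub-case, either by the residual freedom in choosing $k_i, l_i$ or by treating it directly, shows integrability trivially (a constant $\tilde\beta_{n-2m+1}$ already linearizes the flow).
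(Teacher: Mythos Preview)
Your computation of $\{H,H_5\}$ contains a genuine error, and everything that follows it (the attempt to impose $l_1=l_2$, $k_1=k_2=0$, or to invoke constraints coming from the flow of $\tilde\beta_0$) is an unnecessary detour caused by that error.

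Concretely: when you split $H_5 = H_1 + \tfrac12 x_3^{*2}$ and evaluate $\{H,\tfrac12 x_3^{*2}\}$ via $\langle x,[\nabla H, x_3^* X_3^*]\rangle$, you claim the result is a combination of $x_1^*$ and $x_2^*$. But under the natural pairing on the double $\delta_1=\mathfrak g_1\oplus\mathfrak g_1^*$ one has $\langle x, X_i^*\rangle = x_i$, not $x_i^*$. Equivalently, just use the coordinate brackets the paper already recorded: $\{x_1,x_3^*\}=-2x_2$ and $\{x_2,x_3^*\}=2x_1$. With either viewpoint the correct value is
\[
\{H,\tfrac12 x_3^{*2}\}
= (k_1+l_1x_1)x_3^*(-2x_2) + (k_2+l_2x_2)x_3^*(2x_1),
\]
which is \emph{exactly} the negative of $\{H,H_1\}=\big((k_1+l_1x_1)x_2-(k_2+l_2x_2)x_1\big)(2x_3^*)$. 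Hence $\{H,H_5\}=0$ identically, for \emph{any} choice of $k_i,l_i$. This is precisely the one-line computation the paper carries out. No further hypotheses on the Hamiltonian are required, and you should delete the paragraph speculating about restricting $l_1,l_2,k_1,k_2$.

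Your treatment of $H_2,H_3,H_4$ (central because $x_3,x_1^*,x_2^*$ Poisson-commute with everything) and your linear-independence argument via the $5\times 6$ Jacobian are essentially the same as the paper's; once the involution error above is fixed, the remainder of your proposal matches the paper's proof.
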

\begin{proof}
Since the Poisson bracket between any of the coordinates $x_1^{*},
x_2^*, x_3$ and any coordinate $x_i$ or $x_i^*$, $i\in\{1,2,3\}$
vanishes, it follows that any of $H_2$, $H_3$, $H_4$ is in
involution with any of $H$, $H_2$, $H_3$, $H_4$, $H_5$. We
additionally have
\begin{eqnarray*}
\{H,H_5\}&=&
 \left(
 \frac{\partial H}{\partial x_1}
 \frac{\partial H_5}{\partial x_2}
 -\frac{\partial H}{\partial x_1}
 \frac{\partial H_5}{\partial x_2}\right)
 \{x_1,x_2\}
 + \frac{\partial H}{\partial x_1}
 \frac{\partial H_5}{\partial x_3^*}
 \{x_1,x_3^*\}
 +\frac{\partial H}{\partial x_2}
 \frac{\partial H_5}{\partial x_3^*}
 \{x_2,x_3^*\}\\
 &=&\big((k_1+l_1x_1)x_2-(k_2+l_2x_2)x_1\big)(2x_3^*)+(k_1+l_1x_1)x_3^*(-2x_2)+
(k_2+l_2x_2)x_3^*(2x_1)\\
 &=&0.
\end{eqnarray*}
Since the Jacobian matrix
$$ \frac{\partial(H,H_2,H_3,H_4,H_5)}{\partial(x_1,x_2,x_3,x_1^*,x_2^*,x_3^*)}=
{\small \left(
\begin{array}{cccccc}
k_1+l_1x_1&k_2+l_2x_2&k_3+l_3x_3&  0& 0& 0\\
0& 0 &x_3&  0& 0& 0\\
0& 0 & 0 &x_1^*&  0 & 0\\
0& 0 & 0 & 0 &x_2^*& 0\\
x_1&x_2&0& 0 & 0 &x_3^*
\end{array}
\right)}
$$
has the rank 5 on an open dense set, we get that
 $dH$, $dH_2$, $dH_3$, $dH_4$, $dH_5$
are linearly independent on that set.
\end{proof}
By Lemmas \ref{l:h1h2h3h4h5}(iii), \ref{l:thehamiltonian},
\ref{l:hh2h3h4h5}, we conclude the following theorem.

\begin{theorem}
The equation \eqref{e:t:corr:betazero} is integrable on $\delta_1$.
\end{theorem}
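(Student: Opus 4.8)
The plan is to package the three preceding lemmas into the Liouville-type integrability criterion for a Hamiltonian system on a Poisson manifold. Recall that if $M$ is a Poisson manifold of dimension $N$ whose Poisson tensor has generic rank $2r$, then a Hamiltonian flow on $M$ is completely integrable when it admits $N-r$ functions that pairwise Poisson-commute and whose differentials are linearly independent on an open dense subset, the Hamiltonian of the flow being one of them (equivalently: $N-2r$ Casimirs together with $r$ further independent commuting integrals). First I would fix the numerology: by Lemma \ref{l:h1h2h3h4h5}(iii) the Poisson manifold $\delta_1$ has $N=\dim\delta_1=6$ and rank $2r=2$, so $r=1$ and the required number of independent commuting integrals is $N-r=5$.

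Next I would identify the Hamiltonian. By Lemma \ref{l:thehamiltonian} there is a function $H$ on $\delta_1$, linear-plus-quadratic in the coordinates $x_1,x_2,x_3$, with $\tilde\beta_{n-2m+1}(t)=\nabla H(\tilde\beta_0(t))$; by the discussion preceding that lemma this is exactly the statement that \eqref{e:t:corr:betazero} is the Hamiltonian flow of $H$ with respect to the Lie-Poisson bracket on $\delta_1$.

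Finally I would invoke Lemma \ref{l:hh2h3h4h5}: the five functions $H,H_2,H_3,H_4,H_5$ are pairwise in involution and their differentials are linearly independent on an open dense subset of $\delta_1$. Since this matches the count $N-r=5$ dictated by Lemma \ref{l:h1h2h3h4h5}(iii), and since $H$ is the Hamiltonian generating \eqref{e:t:corr:betazero}, the flow \eqref{e:t:corr:betazero} is completely integrable on $\delta_1$, which is the assertion of the theorem.

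I do not expect a real obstacle in assembling these pieces: the genuine work has already been done in Lemma \ref{l:thehamiltonian}, where the $2$-step nilpotency of $\mathfrak g_1$ forces both $\tilde\beta_0(t)$ and $\tilde\beta_{n-2m+1}(t)$ to be affine in $t$ so that a quadratic $H$ suffices, and in the involution and independence checks of Lemmas \ref{l:h1h2h3h4h5} and \ref{l:hh2h3h4h5}. The one point requiring care is to state the complete-integrability definition for a Poisson (rather than symplectic) manifold correctly, so that ``$6$ coordinates, rank $2$'' produces the target number $N-r=5$ of commuting integrals and not, say, $N/2=3$; getting that bookkeeping right is essentially all that the theorem adds on top of the lemmas.
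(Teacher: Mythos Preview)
Your proposal is correct and follows essentially the same approach as the paper: the paper simply states that the theorem follows by combining Lemmas \ref{l:h1h2h3h4h5}(iii), \ref{l:thehamiltonian}, and \ref{l:hh2h3h4h5}, and you have spelled out precisely how those pieces fit into the Liouville integrability criterion on a Poisson manifold (with the correct count $N-r=6-1=5$). Your added discussion of the Poisson-manifold bookkeeping is exactly what the paper leaves implicit in the sentence preceding Lemma \ref{l:thehamiltonian}.
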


Similarly to the $\delta_1$ case, we can prove that the equation
\eqref{e:t:corr:betazero} is integrable for a 3-step nilpotent Lie
algebra and Hamiltonian for a 4-step nilpotent Lie algebra. We
consider to the Hopf subalgebra of rooted trees $\mathcal H_2$
generated by
$$ 1_\mathcal{T}, \;\;\;\;
  \ta1,           \;\;\;\;
   \tb2,           \;\;\;\;
  \td31,   \;\;\;\;
\th43
 $$
\def\tw5{{\scalebox{0.25}{
\begin{picture}(42,42)(8,-8)
 \SetWidth{0.5} \SetColor{Black}
 \Vertex(14,-3){5.66}
 \Vertex(37,27){5.66} 
 \Vertex(60,-3){5.66}
 \SetWidth{1.0}
 \Line(14,-3)(37,27)
 \Line(44,-3)(37,27)
 \Line(37,27)(30,-3)
 \Line(37,27)(60,-3)
 \SetWidth{0.5}
 \Vertex(30,-3){5.66}
 \Vertex(44,-3){5.66}
\end{picture}}}}
 and  the Hopf subalgebra $\mathcal H_3$ generated by
$$ 1_\mathcal{T}, \;\;\;\;
 \ta1,           \;\;\;\;
 \tb2,           \;\;\;\;
 \td31,   \;\;\;\;
 \th43,   \;\;\;\;
 \tw5.
 $$
Let $\mathfrak g_2$ and $\mathfrak g_3$ be the  Lie algebras of
infinitesimal characters with values in $\mathbb C$ of $\mathcal
H_2$ and  respectively $\mathcal H_3$. Let $\delta_2$ and $\delta_3$
the double Lie algebras of  $\mathfrak g_2$ and $\mathfrak g_3$. For
a nonempty tree $T_1$, let $Z_{T_1}$ be the infinitesimal character
determined on trees by $Z_{T_1}(T_2)=1$ if $T_1=T_2$ and
$Z_{T_1}(T_2)=0$ if $T_1\not=T_2$. Let
 $$X_1=Z_\ta1,  \;\;\;\; X_2=Z_\tb2,  \;\;\;\; X_3=Z_\td31,   \;\;\;\;
 X_4=Z_\th43,   \;\;\;\;
 X_5=Z_\tw5,
 $$
and $X_1^*, X_2^*,X_3^*, X_4^*, X_5^*$ the dual base of $X_1 , X_2
,X_3 , X_4 , X_5$.
 The Lie algebras of $\delta_2$ and  $\delta_3$ have the following nonzero Lie
 brackets:
$$[X_1,X_2]_{\delta_2}=2X_3,\ \ [X_1,X_3^*]_{\delta_2}=-2X_2^*,\ \  [X_2,X_3^*]_{\delta_2}=2X_1^*,$$
$$[X_1,X_3]_{\delta_2}=3X_4,\ \ [X_1,X_4^*]_{\delta_2}=-3X_3^*,\ \  [X_3,X_4^*]_{\delta_2}=3X_1^*,$$
$$[X_1,X_2]_{\delta_3}=2X_3,\ \ [X_1,X_3^*]_{\delta_3}=-2X_2^*,\ \  [X_2,X_3^*]_{\delta_3}=2X_1^*,$$
$$[X_1,X_3]_{\delta_3}=3X_4,\ \ [X_1,X_4^*]_{\delta_3}=-3X_3^*,\ \  [X_3,X_4^*]_{\delta_3}=3X_1^*,$$
$$[X_1,X_4]_{\delta_3}=4X_5,\ \ [X_1,X_5^*]_{\delta_3}=-4X_4^*,\ \  [X_4,X_5^*]_{\delta_3}=4X_1^*,$$
 Notice that
$\delta_2$  is not a Lie subalgebra of $\delta_3$, just because of
$[X_1,X_4]_{\delta_2}=0$ and $[X_1,X_4]_{\delta_3}=4X_5$, but
$\delta_2$ is a truncation of $\delta_3$. Notice also that $\mathfrak g_2$ and $\delta_2$ are step
3-nilpotent Lie algebras, while $\mathfrak g_3$ and $\delta_3$ are
step 4-nilpotent Lie algebras. By an argument  similar to the one
above, the 3-step nilpotency of $\mathfrak g_2$ implies that
$\tilde\beta_0(t)$ and $\tilde\beta_{n-2m+1}(t)$ are quadratic in
$t$ when we consider equation \eqref{e:t:corr:betazero} on $\mathfrak g_2$,
while the 4-step nilpotency of $\mathfrak g_3$ implies that
$\tilde\beta_0(t)$ and $\tilde\beta_{n-2m+1}(t)$ are cubic in $t$
for the Lie algebra $\mathfrak g_3$.

The Lie-Poisson brackets are nonzero only for the following pairs:
$$\{x_1,x_2\}_{\delta_2}(x)=2x_3^*, \ \ \{x_1,x_3^*\}_{\delta_2}(x)=-2x_2, \ \
\{x_2,x_3^*\}_{\delta_2}(x)=2x_1,$$
$$\{x_1,x_3\}_{\delta_2}(x)=3x_4^*, \ \ \{x_1,x_4^*\}_{\delta_2}(x)=-3x_3, \ \
\{x_3,x_4^*\}_{\delta_2}(x)=3x_1,$$
$$\{x_1,x_2\}_{\delta_3}(x)=2x_3^*, \ \ \{x_1,x_3^*\}_{\delta_3}(x)=-2x_2, \ \
\{x_2,x_3^*\}_{\delta_3}(x)=2x_1,$$
$$\{x_1,x_3\}_{\delta_3}(x)=3x_4^*, \ \ \{x_1,x_4^*\}_{\delta_3}(x)=-3x_3, \ \
\{x_3,x_4^*\}_{\delta_3}(x)=3x_1,$$
 $$\{x_1,x_4\}_{\delta_3}(x)=4x_5^*,
\ \ \{x_1,x_5^*\}_{\delta_3}(x)=-4x_4, \ \
\{x_4,x_5^*\}_{\delta_3}(x)=4x_1.$$
 Straightforward computations shows that $(\delta_2,
 \{\cdot,\cdot\}_{\delta_2})$ and $(\delta_3,
 \{\cdot,\cdot\}_{\delta_3})$ are Poisson manifolds of ranks $2r_2=4$
 and respectively $2r_2=6$. The following functions defined on
 $\delta_2$ are in involution with respect to $\{\cdot,\cdot\}_{\delta_2}$
  and linearly independent:
$$
 H^{\delta_2}_1(x)=\frac{x_1^2}{2}+\frac{x_3^2}{2},\ \ \
 H^{\delta_2}_2(x)=\frac{x_4^2}{2},\ \ \
 H^{\delta_2}_3(x)=\frac{x_1^{*2}}{2},\ \ \
 H^{\delta_2}_4(x)=\frac{x_2^{*2}}{2},
$$ $$
H^{\delta_2}_5(x)=\frac{x_4^{*2}}{2}+H^{\delta_2}_1(x) ,\ \ \
 H^{\delta_2}_6(x)=\frac{x_2^2}{2}+\frac{x_3^{*2}}{2}+H^{\delta_2}_5(x).
$$
On $\delta_3$, the following functions  are in involution with
respect to $\{\cdot,\cdot\}_{\delta_3}$
  and linearly independent:
$$
 H^{\delta_3}_1(x)=\frac{x_1^2}{2}+\frac{x_4^2}{2},\ \ \
 H^{\delta_3}_2(x)=\frac{x_5^2}{2},\ \ \
 H^{\delta_3}_3(x)=\frac{x_1^{*2}}{2},\ \ \
 H^{\delta_3}_4(x)=\frac{x_2^{*2}}{2},
$$
$$
 H^{\delta_3}_5(x)=\frac{x_2^2}{2}+\frac{x_3^{*2}}{2},\ \ \
 H^{\delta_3}_6(x)=\frac{x_3^2}{2}+\frac{x_4^{*2}}{2},\ \ \
 H^{\delta_3}_7(x)=\frac{x_5^{*2}}{2}.
$$

Now, we show that the equation \eqref{e:t:corr:betazero} is a Hamiltonian
equation both for  $ \mathfrak g_2 $, $ \mathfrak g_3 $. Let
$H^{\mathfrak g_2}$, $H^{\mathfrak g_3}$ be quadratic functions on
$\mathfrak g_2$ and respectively on $\mathfrak g_3$. The function
 $$H^{\mathfrak g_2}(x)=\sum_{i=1}^{4}k_ix_i+\frac{l_ix_i^2}{2}+\sum_{1\leq
 j<p\leq 4}\xi_{j,p}x_jx_p$$
is determined by 14 variables
 $\{k_i,l_i,\xi_{j,p}\}_{1\leq i\leq 4, \ 1\leq  j<p\leq 4}
$, while a quadratic function on $ \mathfrak g_3$ is determined by
20 variables. In local coordinates, after identifying the
coefficients in t and taking into the account that
$\tilde\beta_0(t)$, $\tilde\beta_{n-2m+1}(t)$ are quadratic in t
(for the $\mathfrak g_2$ case) or cubic (for the $\mathfrak g_3$
case), the equation $\nabla H^{\mathfrak g_2}
(\tilde\beta_0(t))=\tilde\beta_{n-2m+1}(t)$ reduces to  a systems of
12 linear equations, and the equation $\nabla H^{\mathfrak g_3}
(\tilde\beta_0(t))=\tilde\beta_{n-2m+1}(t)$ reduces to  a systems of
20 linear equations, thus in both cases, there exists a Hamiltonian
function for \eqref{e:t:corr:betazero}.

For $i\in\{2,3\}$, let
 $\pi_i:\delta_i=\mathfrak g_i\oplus \mathfrak g_i^*\to \mathfrak g_i$
 be the projections onto $\mathfrak g_i$ and let $H^{\delta_i}= H^{\mathfrak g_i}\circ
 \pi_i$.
 The functions $H^{\delta_2}$, $H^{\delta_2}_2$,
$H^{\delta_2}_3$, $H^{\delta_2}_4$, $H^{\delta_2}_5$,
$H^{\delta_2}_6$ are in involution with respect to
$\{\cdot,\cdot\}_{\delta_2}$ and linearly independent. This
concludes that equation \eqref{e:t:corr:betazero} is integrable for
${\delta_2}$.

The polynomial functions $H^{\delta_3}$, $H^{\delta_3}_2$, $H^{\delta_3}_3$,
$H^{\delta_3}_4$,
$H^{\delta_3}_1+H^{\delta_3}_5+H^{\delta_3}_6+H^{\delta_3}_7$ are in
involution with respect to $\{\cdot,\cdot\}_{\delta_3}$ and linearly
independent. In order to show that equation \eqref{e:t:corr:betazero} is
integrable with the Hamiltonian $H^{\delta_3}$, it would be
sufficient to find another two functions $f_1, f_2$ on $\delta_3$
such that
 $H^{\delta_3}$, $H^{\delta_3}_2$,
$H^{\delta_3}_3$, $H^{\delta_3}_4$,
$H^{\delta_3}_1+H^{\delta_3}_5+H^{\delta_3}_6+H^{\delta_3}_7$,
$f_1$, $f_2$ are in involution with respect to
$\{\cdot,\cdot\}_{\delta_3}$ and linearly independent.
The existence of such two functions $f_1$ and $f_2$ is ensured by the proved  the Mishchenko-Fomeko conjecture presented in \cite{bol}.

We summarize the following.
\begin{theorem}
(i) If $G_2$ is the groups of characters of the   Hopf subalgebra of rooted trees $\mathcal H_2$, then the equation \eqref{e:t:corr:betazero} is completely
integrable for $\delta_2$. \\
(ii) If $G_3$ is the groups of characters of the   Hopf subalgebra of rooted trees $\mathcal H_2$, then the equation \eqref{e:t:corr:betazero} is completely
integrable for $\delta_3$.
\end{theorem}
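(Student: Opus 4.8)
\medskip
\noindent\emph{Proof idea.}
The plan is to carry out, for $\delta_2$ and for $\delta_3$, the same three-step argument that settled the $\delta_1$ case: (a) show that \eqref{e:t:corr:betazero} is the restriction of a Lie--Poisson Hamiltonian flow; (b) exhibit a maximal family of functions in involution containing that Hamiltonian; (c) check that their differentials are independent on an open dense set. Recall that a Poisson manifold $M$ with generic Poisson rank $2r$ is completely integrable once one has $\dim M-r$ pairwise-involutive functions with generically independent differentials. The generic rank of $\{\cdot,\cdot\}_{\delta_i}$ is computed, as in Lemma \ref{l:h1h2h3h4h5}(iii), from the maximal rank of $\mathrm{ad}_{\delta_i}$, giving $2r_2=4$ and $2r_3=6$, so the targets are $8-2=6$ integrals for $\delta_2$ and $10-3=7$ for $\delta_3$.

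For step (a) I would repeat the argument of Lemma \ref{l:thehamiltonian} one, respectively two, differentiation(s) deeper. Differentiating \eqref{e:t:corr:betazero} using \eqref{betazerodiff}, \eqref{betan2m1diff} and the nilpotency hypothesis, the $3$-step nilpotency of $\mathfrak g_2$ forces $d^3\tilde\beta_0/dt^3=0$ and $d^3\tilde\beta_{n-2m+1}/dt^3=0$, so both curves are $\mathfrak g_2$-valued polynomials of degree $\leq 2$ in $t$; the $4$-step nilpotency of $\mathfrak g_3$ analogously forces degree $\leq 3$. Taking a generic quadratic $H^{\mathfrak g_i}$ on $\mathfrak g_i$ ($14$ unknown coefficients for $i=2$, $20$ for $i=3$), the condition $\nabla H^{\mathfrak g_i}(\tilde\beta_0(t))=\tilde\beta_{n-2m+1}(t)$, i.e.\ that $H^{\mathfrak g_i}$ is a Hamiltonian for \eqref{e:t:corr:betazero}, becomes after equating like powers of $t$ componentwise a linear system in those coefficients: $12$ equations in $14$ unknowns for $\delta_2$, and a $20\times 20$ system for $\delta_3$. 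Solving it and setting $H^{\delta_i}=H^{\mathfrak g_i}\circ\pi_i$ for the projection $\pi_i\colon\delta_i\to\mathfrak g_i$ gives the required Hamiltonian on the double Lie algebra.

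For $\delta_2$ this is enough. Mirroring Lemma \ref{l:hh2h3h4h5}, one checks directly from the listed brackets that $H^{\delta_2},H^{\delta_2}_2,H^{\delta_2}_3,H^{\delta_2}_4,H^{\delta_2}_5,H^{\delta_2}_6$ pairwise Poisson-commute: $H^{\delta_2}_2,H^{\delta_2}_3,H^{\delta_2}_4$ are functions of the coordinates $x_4,x_1^*,x_2^*$, which Poisson-commute with every coordinate, while the only nontrivial brackets, those among $H^{\delta_2}$ and the mixed functions $H^{\delta_2}_5,H^{\delta_2}_6$, vanish by the same cancellation that gave $\{H,H_5\}=0$ in Lemma \ref{l:hh2h3h4h5}. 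Their differentials are independent on an open dense set by exhibiting a $6\times 8$ Jacobian of rank $6$, exactly as in the $\delta_1$ case. This proves part (i).

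For $\delta_3$ the same computations yield only the five pairwise-involutive, generically independent functions $H^{\delta_3},H^{\delta_3}_2,H^{\delta_3}_3,H^{\delta_3}_4$ and $H^{\delta_3}_1+H^{\delta_3}_5+H^{\delta_3}_6+H^{\delta_3}_7$, which is two short of the required seven, and this is where I expect the main difficulty. The explicit quadratic and Casimir-type ans\"atze do not by themselves complete the involutive family, so I would invoke the (now proved) Mishchenko--Fomenko conjecture in the form recorded in \cite{bol} to produce two further functions $f_1,f_2$ that preserve involutivity and generic independence, obtaining $7=\dim\delta_3-r_3$ integrals of motion and hence part (ii). I expect the two subtle points to be: verifying that the $20\times 20$ linear system of step (a) is consistent, which should follow from the structure of \eqref{betan2m1diff} rather than from a naive count; and arranging the Mishchenko--Fomenko completion to be compatible with the prescribed Hamiltonian $H^{\delta_3}$ and with the Casimir-type integrals $H^{\delta_3}_2,H^{\delta_3}_3,H^{\delta_3}_4$ already singled out.
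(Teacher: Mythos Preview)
Your proposal is correct and follows essentially the same approach as the paper, down to the specific counts ($14$ coefficients vs.\ $12$ equations for $\mathfrak g_2$, $20$ vs.\ $20$ for $\mathfrak g_3$), the explicit involutive families $H^{\delta_2},H^{\delta_2}_2,\dots,H^{\delta_2}_6$ and $H^{\delta_3},H^{\delta_3}_2,H^{\delta_3}_3,H^{\delta_3}_4,H^{\delta_3}_1+H^{\delta_3}_5+H^{\delta_3}_6+H^{\delta_3}_7$, and the appeal to the proved Mishchenko--Fomenko conjecture from \cite{bol} for the two missing integrals on $\delta_3$. The two subtle points you flag at the end---solvability of the $20\times20$ linear system and compatibility of the Mishchenko--Fomenko completion with the prescribed Hamiltonian $H^{\delta_3}$---are likewise not argued in detail in the paper, which simply asserts both.
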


\bibliographystyle{amsplain}
\bibliography{int-sys-lmp}
\end{document}